\title{Rare Event Simulation for Steady-State Probabilities \\ via Recurrency Cycles\thanks{This article may be downloaded for personal use only. Any other use requires prior permission of the author and AIP Publishing. This article appeared in K. Bisewski et al., Chaos: An Interdisciplinary Journal of Nonlinear Science 29, no. 3 (2019): 033131. and may be found at \url{https://doi.org/10.1063/1.5080296}.}}
\author[1]{Krzysztof Bisewski\footnote{Email: bisewski@cwi.nl}}
\author[1,2]{Daan Crommelin}
\author[2]{Michel Mandjes}
\affil[1]{\small{Centrum Wiskunde \& Informatica, Amsterdam}}
\affil[2]{\small{Korteweg de Vries Institute for Mathematics, University of Amsterdam}}
\date{\today \vspace{-.5cm}}
\numberwithin{equation}{section} % number equations with sections
\newcommand*{\assenum}[1]{%
  \expandafter\@assenum\csname c@#1\endcsname%
}
\newcommand*{\assenumref}[1]{%
  \expandafter\@assenumref\csname c@#1\endcsname%
}
\newcommand*{\@assenum}[1]{%
  $\ifcase#1\or{\rm (I)}\or{\rm (II)}%\or{\rm (B)}\or{\rm (A^{**})}%
    \else\@ctrerr\fi$%
}
\newcommand*{\@assenumref}[1]{%
  $\ifcase#1\or{\rm I}\or{\rm II}%\or{\rm B}\or{\rm A^{**}}%
    \else\@ctrerr\fi$%
}
\AddEnumerateCounter{\assenum}{\@assenum}{1}
\AddEnumerateCounter{\assenumref}{\@assenumref}{1}
\DeclareMathOperator*{\argmax}{arg\,max}
\newtheorem{definition}{Definition}
\newtheorem{theorem}{Theorem}
\newtheorem{proposition}{Proposition}
\newtheorem{algorithm}{Algorithm}
\newcommand{\ind}{\mathds{1}}
\newcommand{\R}{\mathbb{R}}
\newcommand{\N}{\mathbb{N}}
\newcommand{\p}{\mathbb{P}}
\newcommand{\BRd}{\mathcal{B}(\R^d)}
\newcommand{\BpRd}{\mathcal{B}^+(\R^d)}
\newcommand{\eqd}{\stackrel{\mbox{\rm \tiny d}}{=}}
\renewcommand{\d}{{\rm d}}
\renewcommand{\a}{\alpha}
\renewcommand{\mid}{\,|\,}
\newcommand{\Exp}{\mathbb{E}}
\newcommand{\Var}{\mathbb{V}\textnormal{\textrm{ar}}}
\newcommand{\RE}{{\rm RE}}
\newcommand{\RTV}{{\rm RTV}}
\newcommand{\Eff}{{\rm Eff}}
\newcommand{\ep}{\varepsilon}
\newcommand{\upo}{^{(1)}}
\newcommand{\upot}{^{(1)}_t}
\newcommand{\uptt}{^{(2)}_t}
\newcommand{\qand}{\quad\text{and}\quad}
\newcommand{\dd}{\xrightarrow{\textnormal{d}}}
\newcommand{\red}[1]{{\color{black} #1}}
\newcommand{\redd}[1]{{\color{black} #1}}
\newcommand{\magenta}[1]{{\color{black} #1}}
\newcommand{\kb}[1]{{\tt \scriptsize KB: #1}}
\newcommand{\dc}[1]{{\color{black} #1}}
\newcommand{\vb}{\vspace{2mm}}
\renewcommand{\tilde}{\widetilde}
\renewcommand{\hat}{\widehat}
\renewcommand{\bar}{\overline}
\newcommand{\ass}{{\rm(\ref{ass:amrein}-\ref{ass:trare})}}
\begin{document}

\maketitle

\begin{abstract}
We develop a new algorithm for the estimation of rare event probabilities associated with the steady-state of a Markov stochastic process with continuous state space $\R^d$ and discrete time steps (i.e.\ a discrete-time $\R^d$-valued Markov chain). The algorithm, which we coin Recurrent Multilevel Splitting (RMS), relies on the Markov chain's underlying recurrent structure, in combination with the Multilevel Splitting method. Extensive simulation experiments are performed, including experiments with a nonlinear stochastic model that has some characteristics of complex climate models. The numerical experiments show that RMS can boost the computational efficiency by several orders of magnitude compared to the Monte Carlo method.
\end{abstract}

\section{Introduction}\label{s:introduction}

Many stochastic processes have a `stable regime', in the sense that with time their distribution converges to a so-called {\it steady-state}. The steady-state (or stationary, equilibrium, ergodic) probability distribution captures the long-term behavior of the process; the steady-state probability of an arbitrary event \dc{(or set)} $B$ is equal to the fraction of time the process spends in $B$ in the long run (irrespective of the process' initial value). In many application domains steady-state probabilities are of crucial interest; think of physics (e.g.\ particle systems), chemistry (e.g.\ reaction networks), and operations research (e.g.\ queueing systems). Within this context of steady-state distributions, an important subdomain concerns the analysis of {\it rare events}. Particularly when it concerns rare events with a potentially catastrophic impact, there is a clear need to accurately estimate their likelihood (earthquakes, extreme weather conditions, simultaneous failure of multiple components of a machine, etc.). As examples, we refer to \cite{ragone2018computation} for rare-event simulation methods in \magenta{the} climate context, and to  \cite{rubino2009rare} for a textbook treatment covering applications in e.g.\ engineering, chemistry, and biology.

\vb

Despite the evident importance of being able to estimate steady-state rare-event probabilities, relatively little attention has been paid to \magenta{the} development of efficient algorithms; rare-event simulation in a finite-time horizon context \magenta{received considerably more attention} (\magenta{focusing  e.g.\ on the estimation} of the probability to hit a set $B_1$ before hitting another set  $B_2$). The main contribution of this paper concerns the development of a broadly applicable rare-event simulation method that is tailored to the estimation of small steady-state probabilities. 

\vb

In our setup we focus on discrete-time $\R^d$-valued Markov chains. This framework covers a wide class of intensively used stochastic models. It for instance includes the numerical solutions to stochastic differential equations (SDEs), see e.g.\ \cite{kloeden1992numerical}. In addition, various (inherently discrete-time) standard models from e.g.\ finance, biology, and econometrics fall under this umbrella. The main advantage of our proposed algorithm is its broad applicability, the fact that it does not require detailed knowledge of the system under study, and that it is fairly straightforward to implement. In the sequel, we let $(X_n)_{n\in\N}$ be our $d$-dimensional Markov chain, which we assume to admit the stationary distribution $\mu$. We are interested in the probability that in steady-state the process attains a value in the set $B$, i.e.,
\begin{equation}\label{def:gamma}
\gamma := \mu(B) = \lim_{N\to\infty}\frac{1}{N} \sum_{n=1}^N \ind\{X_n\in B\}
\end{equation}
Throughout, the event $B$ is assumed to be \textit{rare}, entailing that $\gamma$ is very small, typically of order $10^{-4}$ \magenta{or} less (depending on the application at hand). 

\vb

Our interest lies in estimating rare-event probabilities in the context of \textit{models}, so in principle we can do more than \magenta{applying} statistical methods of extreme value analysis to model data; cf.\ \citet{coles2001introduction} for a textbook on Extreme Value Analysis. 
In our setup, the steady-state distribution is not explicitly known; one therefore has to resort to simulation. The na\"ive, Monte Carlo estimator for $\gamma$ is
\[\hat\gamma_\text{MC} := \frac{1}{N} \sum_{n=1}^N \ind\{X_n\in B\},\]
i.e., \textit{the average number of visits to set $B$ until time $N$}, which is known to be extremely inefficient when $B$ is rare; see e.g.\ \citet{asmussen2007stochastic}. Informally, one needs \magenta{prohibitively} many samples in order to obtain a reasonably accurate estimate of $\gamma$; the number of samples required to obtain an estimate of given precision is inversely proportional to $\gamma$. In many cases, especially while working with complex or high-dimensional systems, where the integration of the model is time consuming, such computation might not be feasible.

\vb

An additional complication is that sampling directly from the steady-state distribution can be challenging. In our new method, we settle this issue 
%\kb{we don't really come up with a remedy for sampling from steady-state} 
by dissecting the paths of the underlying Markov chain into \textit{recurrency cycles}. For an arbitrary set $A$, we say that a recurrency occurs each time $(X_n)_{n\in\N}$ crosses $A$ \textit{inwards}, i.e., each time the event $\{X_{n-1}\not\in A,X_n\in A\}$ occurs. Assuming the process is \magenta{in stationarity}, $\gamma$ is equal to \textit{the average amount of time spent in $B$ between \magenta{two} visits to \magenta{the} set $A$, divided by the average length of a recurrency cycle}.

An example of a recurrency cycle is shown in Figure \ref{fig:example_cycle}. It starts at $P_1$ and ends at $P_5$; the time spent in set $B$ is the time spent between states $P_3$ and $P_4$. Note that recurrency is defined with respect to $A$; it is not necessary that the system enters $B$ during a recurrency cycle.

\vb

In our algorithm we separately estimate the numerator (expected time spent in $B$ during a single recurrency cycle) and the denominator (expected length of a single recurrency cycle). Here, two challenges arise. The first concerns the choice of the set $A$. Any $A$ could in principle be used, but in order to maximize the efficiency of the algorithm, it should be chosen so as to minimize the expected time spent between visits to the set $A$. The second challenge is posed by the rarity of visiting $B$ within a cycle. To tackle this issue, we propose the use of  Multilevel Splitting (MLS), see \citet{garvels2000splitting}, \cite{rubino2009rare}, but we remark  that instead of MLS other methods could be chosen. \redd{These alternatives include Genealogical Particle Analysis (see e.g.\ \citet{del2005genealogical}), RESTART (see e.g.\ \cite{villen2011rare}), Adaptive Multilevel Splitting (see e.g.\ \citet{cerou2007adaptive}), fixed-effort and fixed number of successes versions of Multilevel Splitting (see e.g.\ \citet{amrein2011variant}) and Importance Sampling (see e.g.\ \citet{heidelberger1995fast}). We emphasize that we do not seek to compete with any of the aforementioned methods but rather introduce a new overarching framework, in which all these methods can be used to assess stationary performance metrics. We have chosen to work with MLS mostly for its conceptual simplicity and intuitive use.}

\begin{figure}[tb]
 \begin{adjustwidth}{0cm}{}
        \centering
       \includegraphics[width=0.7\linewidth]{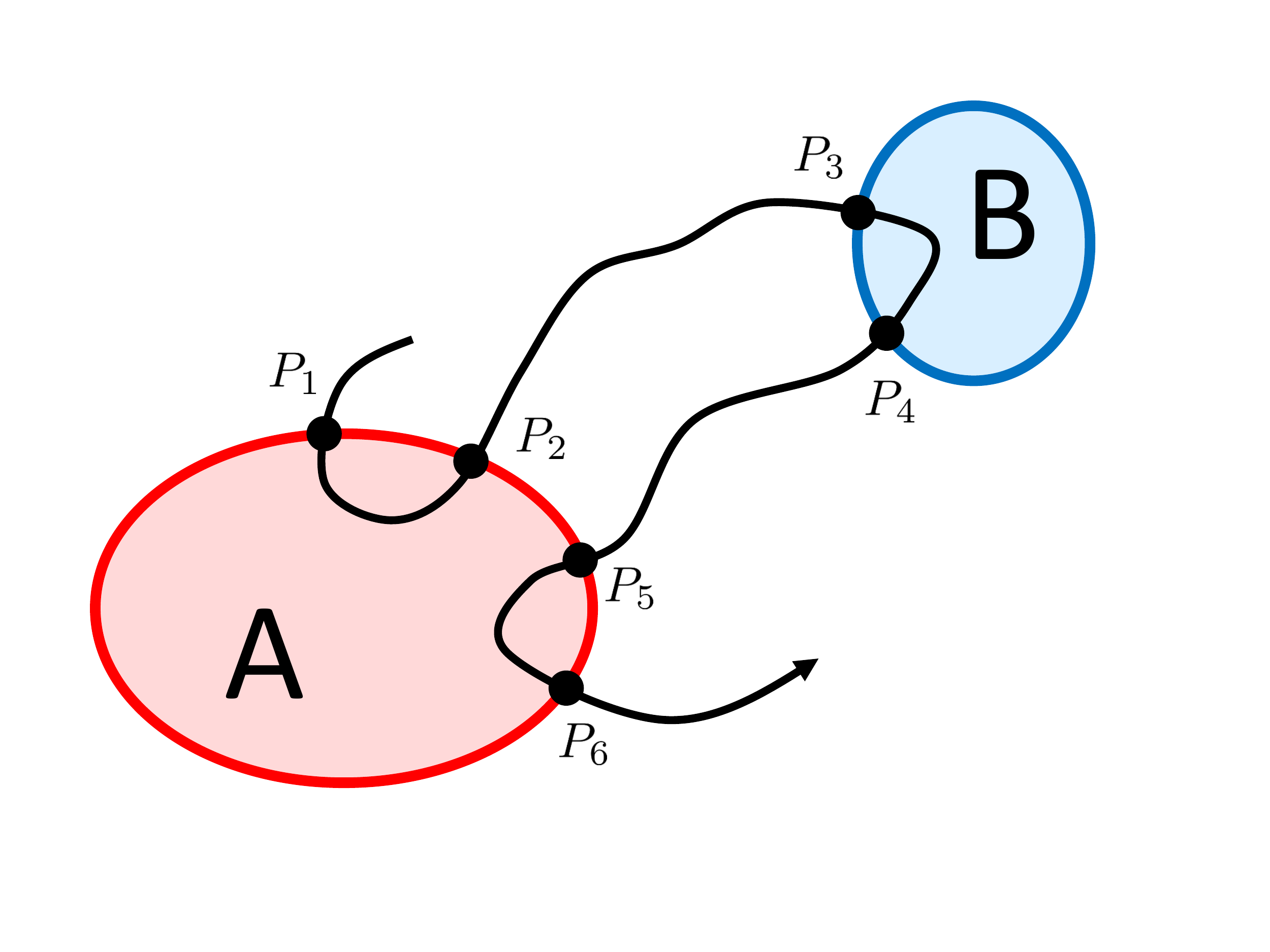}
      \caption{An example of a recurrency cycle. The cycle begins at $P_1$, where the Markov chain enters the set $A$ from the outside, and ends at $P_5$ where the chain enters $A$ again (and the next recurrency cycle begins). 
      }\label{fig:example_cycle}
 \end{adjustwidth}
\end{figure}

\vb

The algorithm we propose is inspired by expressions for steady-state probabilities resulting from the theory of \textit{regenerative processes}. Regeneration instances dissect the path of the process into probabilistically identical, independent segments. For regenerative processes we have that $\gamma$ equals {the average amount of time spent in $B$ in a regeneration cycle divided by the average length of a regeneration cycle}. For more background we refer to \citet{crane1975simulating} and \citet{asmussen2008applied}, or (in a more informal language) \citet{henderson1999can}. In our setup, with its uncountable state space and a steady-state distribution potentially lacking atoms, we cannot straightforwardly construct regeneration points. We therefore develop an approach that relies on the recurrency cycles introduced above, so as to set up a scheme that yields
probabilistically identical (but not necessarily  independent) cycles. We refer to  \citet{goyal1992unified} for an algorithm corresponding to the setting in which the set $A$ consists of finitely many elements (which inspired us to develop our algorithm). We also mention that a large subclass of general (continuous) state-space Markov chains, called \textit{positive Harris}, is regenerative. However, constructing regeneration cycles in this context is typically technically difficult, and in addition the implementation may be computationally inefficient due to excessively long cycle lengths; see \citet{henderson2001regenerative}.

\vb

\red{The manuscript is organized as follows. In Section \ref{s:preliminaries} we discuss preliminaries, such as basic theory of general state-space Markov chains. \dc{We also give} an alternative representation of the parameter $\gamma$ based on the recurrent structure of a Markov Chain in Theorem \ref{thm:reccurence_cycles}. Relying on this alternative representation, in Section \ref{s:algorithm}, we introduce a new algorithm for the estimation of $\gamma$, which we coin {\it Recurrent Multilevel Splitting} (RMS). In Section \ref{s:choice_of_parameters}, we establish (in a simplified setting) the optimal parameters for the RMS algorithm, and provide implementation-related guidelines. \magenta{Theorem \ref{thm:logeff_rms} in Appendix  \ref{appendix:efficiency}
establishes the asymptotic efficiency of the RMS algorithm}. A technical derivation of the optimal parameters is \magenta{given in} Appendix \ref{appendix:optimal_parameters}. In Section \ref{s:experiments} we test the method on a set of numerical examples, we discuss which factors affect the method's performance, and provide heuristics. Finally, in Section \ref{s:discussion} we discuss possible extensions of the algorithm and give a summary. Appendix \ref{appendix:technical_results} consists of \magenta{a collection of required technical results.}}

\iffalse
mainly for simplicity reasons but we argue that the algorithm we present is not splitting-specific and any other rare event simulation method (such as importance sampling) could fit in this framework.
Our idea of the algorithm was inspired by the \textit{Renewal Reward Theorem} for \textit{Regenerative Processes}. Regenerative processes are essentially processes such that their path can be dissected into probabilistically identical, independent cycles. This is somewhat similar to our case, however, our definition of a cycle \textbf{does not} warrant independence. A standard example of regenerative process is a finite state-space Markov chain. There, every visit a certain state of choice is marked as a regeneration. Then parts of the process between entries to that state are iid. An overview of a regeneration method can be found in \citet{crane1975simulating}, \citet{asmussen2008applied}, or with more informal language, in \citet{henderson1999can}.
\fi

%-----------------------------------------------------

\section{Preliminaries}\label{s:preliminaries}

\red{Here we introduce concepts used later in Section \ref{s:algorithm} such as (Harris) recurrence, the stationary measure and \textit{recurrency cycles}.}

\subsection{{Continuous} State-Space Markov Chains}

In this subsection we provide some background on the (well-established) theory of stability of discrete-time Markov chains with a general (continuous) state-space. The underlying theory can be found in textbooks on Markov chains; our notation is in line with the one used in  \citet{meyn2012markov}. 

\vb

The theory of stability for general state-space time-discrete Markov chains differs from the one for its \textit{finite} (or countable) state-space counterpart. Due to the continuous state space, multiple visits to the same state may happen with probability 0. This explains why the classic notion of \textit{irreducibility} and \textit{recurrence} of states has been generalized to {\it sets} (rather than states). In this setting one typically works with the concept of so-called \textit{positive Harris recurrent} chains: sets of states are guaranteed to be visited infinitely often, with in addition a \textit{finite expected return time}. Effectively all Markov chains with an invariant probability distribution are positive Harris (with an exception of pathological, custom-made examples); see \cite[Section~9]{meyn2012markov} for \magenta{a rigorous} treatment of the topic.

\vb
 
Let $(X_n)_{n\in\N}$ be a Markov chain taking values in $\R^d$ with a transition kernel $P(x,\d y)$, meaning that the distribution of $X_{n+1}$ conditional on $X_n=x$ is given by
\begin{equation}\label{eq:transition_kernel}
\p(X_{n+1}\in A \mid X_n = x) = \int_A P(x,\d y)
\end{equation}
for measurable sets $A\subseteq\R^d$. We denote  $P(x,A):= \int_A P(x,\d y)$. Then, the stationary distribution $\mu$ satisfies the relation
\begin{equation}\nonumber
\mu(A) = \int_{\R^d} \mu(\d x)P(x,A).
\end{equation}
For an arbitrary probability measure $\nu$, we define the conditional probability and expectation by $\p_\nu(\cdot) = \p(\cdot\mid X_0\sim\nu)$ and $\Exp_\nu(\cdot) = \Exp(\cdot\mid X_0\sim\nu)$, respectively. In particular, when $\nu$ corresponds to a point mass at $x$, we use the compact notations $\p_x(\cdot) = \p(\cdot\mid X_0=x)$ and $\Exp_x(\cdot) = \Exp(\cdot\mid X_0=x)$, respectively.

\subsection{Recurrent Structure of a Markov Chain}\label{ss:recurrent_structure}

As mentioned in the introduction,  a large class of general state-space Markov chains (more specifically, the class of positive Harris recurrent Markov chains) allows a regenerative structure; see e.g.\ \cite{henderson2001regenerative}. However, for application purposes, it is often difficult to sample the regeneration times. Moreover, even when it is possible to sample these, the implementation is often inefficient due to the long cycle lengths --- in fact, the regeneration may be a rare event itself.

\vb

There are many other ways to decompose a Markov chain into cycles. In this paper we propose to work with cycles that start with an inward crossing of a set $A$ (i.e., entering $A$ from the outside). We denote the time of the $(k+1)$-th inward crossing by $S_{k}$, i.e.,
\begin{equation}\nonumber
S_k := \inf\{ n > S_{k-1} : X_{n-1}\not\in A,X_n\in A\}.
\end{equation}
with $S_{-1}:=0$. Then, we define the paths within the cycles through
\begin{equation}\label{def:cycles}
\mathcal C_k := \big( X_n : S_{k-1}\leq n < S_k-1\big).
\end{equation}
With a $k$-th cycle we associate the \textit{cycle length} and the \textit{cycle origin} (or starting point),
\begin{equation}\label{def:cycle_length}
L_k := S_k-S_{k-1}, \qquad X^A_k := X_{S_{k-1}}.
\end{equation}
\red{We \magenta{call} $A$ the \textit{recurrency set} and $\mathcal C_1,\mathcal C_2, \ldots$ \textit{recurrency cycles}.} Under the assumption that the process $(X_n)_{n\in\N}$ starts in a cycle-stationary regime (that is $X_0\sim \mu$ and $S_0 = 1$.), the pairs $(\mathcal C_1,L_1),$ $(\mathcal C_2,L_2),\ldots$ are identically distributed. However, the cycles \eqref{def:cycles} are generally not independent, as two distinct cycle origins $X^A_k$, $X^A_m$ separated by a short time period $S_{m-1}-S_{k-1}$ tend to be located within the same subregion of the recurrency set. Because of this dependence, the decomposition into recurrency cycles is neither \textit{classic} nor \textit{wide sense regenerative}, see Definition 3.1 and 3.3 in \citet{kalashnikov1994topics}. \redd{The way we define cycles is a special case of the {\it almost regenerative cycles} introduced by \citet{gunther1980almost}. The interested reader is referred to the introduction of \cite{calvin2006semi}, where a more exhaustive account of different regeneration-type methods is outlined.} 

\red{A single recurrency cycle} reflects the behavior of the process in steady-state. To make this claim more precise, define \textit{the total time spent in the set $B$ within the $k$-th cycle}:
\begin{equation}\label{def:reward}
R_k := \sum_{n=S_{k-1}}^{S_{k}-1} \ind\{X_n\in B\}.
\end{equation}
Since (in a cycle-stationary regime) the cycles in \eqref{def:cycles} are identically distributed, so are $R_1,R_2,\ldots$. The following theorem states that the total fraction of time that the process $(X_n)$ spends in the set $B$ is proportional to the expected time spent in $B$ between two consecutive inward crossings into $A$. Define the \textit{frequency of recurrence} $\alpha_A := \p_\mu(X_0\not\in A,X_1\in A)$.

\begin{theorem}\label{thm:reccurence_cycles}
Let $(X_n)_{n\in\N}$ be a positive Harris recurrent Markov chain and let $\mu$ denote its unique stationary probability measure. Let $A$, $B$ be measurable sets such that $\mu(A)\in(0,1)$. Let $L_1$ be as defined in \eqref{def:cycle_length}, $R_1$ as defined  in \eqref{def:reward}, and $T_B := \Exp_\mu R_1$. Then $\Exp_\mu L_1<\infty$,
\begin{equation}\label{def:rec_prop}
\mu(B) = \a_A \cdot T_B
\end{equation}
and $\a_A = (\Exp_\mu L_1)^{-1}$.
\end{theorem}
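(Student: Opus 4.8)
The plan is to establish the three claims --- finiteness of $\Exp_\mu L_1$, the identity $\mu(B)=\a_A\cdot T_B$, and $\a_A = (\Exp_\mu L_1)^{-1}$ --- by an ergodic / long-run-averaging argument, exploiting that the process is positive Harris recurrent and that in the cycle-stationary regime the pairs $(\mathcal C_k, L_k)$ (and hence the rewards $R_k$) are identically distributed. The unifying tool is the observation that both $\mu(B)$ and the cycle quantities can be written as almost-sure limits of empirical time-averages along a single trajectory, by the ergodic theorem for positive Harris chains (see \cite[Section~17]{meyn2012markov}).

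First I would set up the counting of inward crossings. Along a trajectory, let $N(n) := \sum_{j=1}^n \ind\{X_{j-1}\not\in A, X_j\in A\}$ count the number of inward crossings of $A$ up to time $n$; this is exactly the number of completed-or-started cycles, and $S_{k}$ are its jump times. By the ergodic theorem applied to the indicator functional $f(x_{j-1},x_j) = \ind\{x_{j-1}\not\in A, x_j\in A\}$ (a bounded functional of the chain, which is itself positive Harris on the pair space), we get $N(n)/n \to \a_A$ almost surely, where $\a_A = \p_\mu(X_0\not\in A, X_1\in A)$ is precisely the stationary frequency of inward crossings. Positive Harris recurrence guarantees that $A$ is entered infinitely often and, crucially, that $\a_A>0$ whenever $\mu(A)\in(0,1)$, so that $N(n)\to\infty$; the hypothesis $\mu(A)\in(0,1)$ is what rules out the degenerate cases where inward crossings never occur (so this is where I would verify $\a_A>0$, invoking irreducibility to ensure the boundary of $A$ is actually crossed with positive stationary frequency).

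Next I would relate the cycle averages to the crossing rate. Writing $S_{N(n)}$ for the last crossing time before $n$, the number of cycles completed by time $n$ is $\approx N(n)$, and $\sum_{k=1}^{N(n)} L_k = S_{N(n)} \approx n$. Dividing, $n/N(n) = \frac{1}{N(n)}\sum_{k=1}^{N(n)} L_k$, and since the left side tends to $1/\a_A$ while the right side --- again by identical distribution of the $L_k$ together with a renewal-reward / law-of-large-numbers argument for the cycle sums --- tends to $\Exp_\mu L_1$, I obtain $\Exp_\mu L_1 = 1/\a_A$, which is finite precisely because $\a_A>0$. The same bookkeeping applied to the reward functional gives $\frac1n\sum_{j=1}^n \ind\{X_j\in B\} = \frac{N(n)}{n}\cdot\frac{1}{N(n)}\sum_{k=1}^{N(n)} R_k \to \a_A\cdot\Exp_\mu R_1 = \a_A\cdot T_B$; since the left side converges to $\mu(B)$ by \eqref{def:gamma}, this yields $\mu(B)=\a_A\cdot T_B$.

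The main obstacle I anticipate is not the heuristic bookkeeping above but making the passage from the time-average along the trajectory to the cycle-average rigorous: one must justify interchanging the limit over $n$ with the sum over cycles and control the boundary effects of the last incomplete cycle (the difference between $S_{N(n)}$ and $n$, and between $\sum_{k\le N(n)}R_k$ and the partial reward accrued after the final crossing). This requires either a genuine renewal-reward theorem for the (identically distributed but dependent) cycle sequence, or --- cleaner --- bypassing the dependence entirely by working only with the ergodic averages of bounded functionals of the chain, for which $L^1$ convergence under $\p_\mu$ holds by the positive Harris ergodic theorem, and then \emph{identifying} the limits with $\a_A$, $\Exp_\mu L_1$, and $\Exp_\mu R_1$ via the cycle-stationarity assumption $X_0\sim\mu$, $S_0=1$. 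I would favor the latter route, since it sidesteps the lack of independence between cycles that the text emphasizes and reduces everything to the single nontrivial input $\a_A>0$.
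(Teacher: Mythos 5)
Your proposal is correct in outline but follows a genuinely different route from the paper. The paper's proof is short and citation-based: it lifts the chain to the pair chain $Z_n=(X_{n-1},X_n)$, observes that the crossing times $S_k$ are exactly the visit times of $Z_n$ to $\mathcal A = (A^c,A)$, and then invokes the Kac-type occupation representation of the invariant measure (Theorem 10.4.9 of Meyn and Tweedie) for the set $\mathcal A$; the identities $\mu(B)=\a_A \cdot T_B$ and $\a_A=(\Exp_\mu L_1)^{-1}$ drop out by identifying the terms of that formula, with $\mu(A)\in(0,1)$ guaranteeing $\tilde\mu(\mathcal A)=\a_A>0$. You instead argue via almost-sure time averages: the Harris law of large numbers for bounded functionals of the pair chain gives $N(n)/n\to\a_A$ and $n^{-1}\sum_j\ind\{X_j\in B\}\to\mu(B)$, and renewal bookkeeping converts these into statements about empirical cycle averages. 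Your approach is more self-contained, yields the strong-law interpretation that directly matches the definition \eqref{def:gamma}, and makes transparent where $\mu(A)\in(0,1)$ enters (your argument for $\a_A>0$ is sound: both $A$ and $A^c$ are visited infinitely often a.s., so crossings must occur, whereas $\a_A=0$ together with stationarity and a union bound would force that a.s.\ no crossing ever happens). The paper's approach buys brevity and rigor by citation; yours buys an elementary, trajectory-wise picture.

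One step of your plan needs firming up, and it is exactly the one you flagged: identifying the a.s.\ limits of the empirical cycle averages $\tfrac1k\sum_{j\le k}L_j$ and $\tfrac1k\sum_{j\le k}R_j$ with $\Exp_\mu L_1$ and $\Exp_\mu R_1$. Identical distribution of the cycles is \emph{not} sufficient for this (empirical means of dependent, identically distributed variables need not converge to the common mean), and $L_k$, $R_k$ are unbounded, so the bounded-functional ergodic theorem does not cover them directly. A clean fix: under the cycle-stationary (Palm) initial law the cycle sequence is genuinely stationary, not merely identically distributed, and it is nonnegative, so Birkhoff's theorem gives $\tfrac1k\sum_{j\le k}L_j\to\Exp[L_1\mid\mathcal I]$ a.s.\ (possibly infinite), with $\mathcal I$ the invariant $\sigma$-field. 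Since the Palm law is absolutely continuous with respect to $\p_\mu$ (it is conditioning on the event $\{X_0\notin A,\,X_1\in A\}$, which has probability $\a_A>0$), your $\p_\mu$-a.s.\ limit $1/\a_A$ transfers to the Palm law, forcing $\Exp[L_1\mid\mathcal I]=1/\a_A$ a.s.; taking expectations yields $\Exp_\mu L_1=1/\a_A<\infty$ without ever having to prove ergodicity of the cycle sequence. The same argument applied to $R_k$, together with your edge-term control (which works, since $L_{N(n)+1}/N(n)\to0$ follows from $S_k/k\to1/\a_A$), gives $\mu(B)=\a_A\cdot T_B$. Alternatively, one can note that the cycles themselves form a positive Harris Markov chain and apply the Harris LLN for nonnegative functionals to it. With either patch, your argument is complete.
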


\begin{proof}
See Appendix \ref{appendix:technical_results}.
\end{proof}

The factorization (\ref{def:rec_prop}) of $\gamma$ from Theorem \ref{thm:reccurence_cycles} is the starting point from which we develop our steady-state rare-event simulation algorithm in Section \ref{s:algorithm}.

\vb

We note that an analogue of Theorem \ref{thm:reccurence_cycles} holds for regenerative processes. Dissection of a Markov chain into \textit{regeneration cycles} has one clear advantage over dissection into recurrency cycles, namely, the regeneration cycles are \emph{independent}. Using this independence, one can easily infer the variance of an estimator based on regeneration cycles. \dc{Nonetheless, \magenta{it is more attractive to use recurrency cycles than regeneration}, as the latter is harder to implement and has a (much) longer expected cycle length. Moreover,
in situations where it is possible to sample from the stationary distribution $\mu$, one can simulate independent paths until the first recurrency cycle has ended, such that the resulting cycles will be independent as well.}

%---------------------------------------------------------------------------------------------------

\section{Recurrent Splitting Algorithm}\label{s:algorithm}

Our algorithm essentially relies on the result from Theorem \ref{thm:reccurence_cycles}, namely the representation of $\gamma$ as a product of two quantities. Thus, we divide our algorithm into two stages: first there is the estimation of $\a_A$ (the frequency of recurrence, \dc{equal to the reciprocal of the expected cycle length}), and secondly the estimation of $T_B$ (the expected time spent in set $B$ within a recurrency cycle). 
\iffalse Development of these algorithms relies on the assumption that one can simulate the future of a Markov chain given any starting point $X_0 = x_0\in\R^d$.\fi

\subsection{Estimation of $\a_A$}\label{ss:estimation:aa}

While it is relatively straightforward to estimate $\a_A$ (for example with a crude Monte Carlo method), the choice of the recurrency set $A$ is non-trivial. In this section we assume that $A$ has already been chosen; the choice of $A$ is discussed in Section \ref{ss:choice_AIF}.

\vb

In typical situations \dc{one can generate  sample paths of $X_n$ by simulation but} it is not possible to \textit{exactly} sample from the stationary distribution. Even though 
the law of $X_n$ converges to $\mu$ weakly, as $n\to\infty$, at any fixed time $n$, the law of $X_n$ is not exactly $\mu$. Perhaps the most straightforward method to estimate $\alpha_A$ in this setting is the method of batch-means. It relies on dissecting  a path of the Markov chain of length $N$ into $m\in\N$ batches \dc{of equal length}, and calculating the sample frequency of entering the set $A$ for each batch. More specifically, with $M := [N/m]$,
\[\hat\a_k := \frac{1}{M} \sum_{n=(k-1)M+1}^{kM} \ind\{X_{n-1}\not\in A,X_n\in A\},\]
and then the batch-means estimator is
\begin{equation}\label{eq:alpha_BM}
\hat\a^\text{BM}_A := \frac{1}{m}\sum_{k=1}^{m}\hat\a_k.
\end{equation}
Let $s_\text{BM}^2$ be the sample variance of $\hat\a_1,\ldots,\hat\a_m$ and $t_{m-1}$ a Student's t distribution with $m-1$ degrees of freedom. Then, due to the `near independence' between the batches, under appropriate regularity assumptions, 
\begin{equation}\label{eq:alpha_BM_conv}
\sqrt{m}(\hat\a^\text{BM}_A - \a)/s_\text{BM} \dd t_{m-1},
\end{equation}
as $N\to\infty$, with  `$\dd$' denoting convergence in distribution.
For more details and background, we refer to e.g.\ \citet{asmussen2007stochastic}.

We remark that when an exact sampling procedure from $\mu$ is available, then it might be more efficient to use the following Monte Carlo estimator. Generate $M$ independent pairs $$(X^{(1)}_0,X^{(1)}_1),\ldots,(X^{(M)}_0,X^{(M)}_1)$$ with (for all $i=1,.., M$)  $X^{(i)}_0\sim\mu$ and $X^{(i)}_1$ distributed according to the dynamics of the Markov chain \eqref{eq:transition_kernel} conditional on the value of $X^{(i)}_0$. The Monte Carlo estimator
\begin{equation}\label{eq:alpha_MC}
\hat\a^\text{MC}_A := \frac{1}{M}\sum_{i=1}^{M}\ind\{X^{(i)}_0\not\in A,X^{(i)}_1\in A\}
\end{equation}
is unbiased, $\Var\,\hat\a^\text{MC}_A = \a_A(1-\a_A)/M$, and, as $M\to\infty$,
\begin{equation}\label{eq:alpha_MC_conv}
\sqrt{M}(\hat\a^\text{MC}_A - \a)/s_\text{MC} \dd N(0,1),
\end{equation}
with $s_\text{MC}^2$ the sample variance.

\vb

Whether exact simulation from $\mu$ is available or not, both methods allow for the construction of confidence intervals based on the weak convergence results  \eqref{eq:alpha_BM_conv} and \eqref{eq:alpha_MC_conv}. It should be clear that the set $A$ should be chosen such that $\alpha_A$ is not prohibitively small, so that the methods   \eqref{eq:alpha_BM} and \eqref{eq:alpha_MC} are computationally efficient. Otherwise, the estimation of $\alpha_A$ would be a rare event simulation problem itself (which we obviously want to avoid).

\subsection{Estimation of $T_B$}\label{ss:estimation_Tb}

The second stage of the algorithm concerns the estimation of $T_B$, as defined in Theorem \ref{thm:reccurence_cycles}. This step is the more challenging one, as the quantity $T_B$ is expected to be very small. We resort to rare-event simulation methods. For clarity of exposition, throughout this section we assume that the chain $(X_n)_{n\in\N}$ is stationary, $S_0 = 0$ and we drop the subscript in $\p_\mu$ and $\Exp_\mu$ (i.e., we write simply $\p$ and $\Exp$, respectively). \dc{We also assume that we can sample from the distribution of the cycle starting point $X_1^A$ (note that $X_1^A, X_2^A, ...$ are all identically distributed).  If we can not, then we \magenta{sample} from $X_1^A$ approximately; this is discussed in Section \ref{ss:estimation_gamma}}. We first introduce some notation; we define $p_B := \p(\tau_B<\tau^\text{\rm in}_A)$, with
\begin{equation}\nonumber
\tau_B := \inf\{n>0: X_n\in B\}, \quad \tau^\text{\rm in}_A := S_1 = \inf\{n>0: X_{n-1}\not\in A,X_n\in A\},
\end{equation}
and
\begin{equation}\label{def:Rp}
R_+ :\eqd \big(R_1\mid R_1>0)
\end{equation}
\red{with `$\eqd$' denoting equality in distribution.} Note that $\tau^\text{\rm in}_A-1$ marks the end of the \dc{first} recurrency cycle. Since $\{R_1>0\} = \{\tau_B<\tau^\text{in}_A\}$, $p_B$ is the probability of reaching $B$ within a cycle, and $R_+$ is a random variable distributed as the total time spent in the set $B$ within a cycle conditioned on the cycle reaching set $B$. As was noted in \citet{garvels2000splitting},
\begin{align*}
\Exp(R_1) = \p(R_1>0)\cdot\Exp(R_1 \mid R_1>0).
\end{align*}
This entails that
\begin{equation}\label{eq:Tb_decomposition}
T_B = \p(\tau_B<\tau^\text{\rm in}_A) \cdot \Exp(R_1\mid \tau_B<\tau^\text{\rm in}_A) = p_B \cdot \Exp R_+
\end{equation}
The estimation of $p_B$ is a classic rare-event simulation problem, for which various methods have been developed. Following \cite{garvels2000splitting}, we propose to use a Multilevel Splitting (MLS) algorithm to estimate $T_B$ (but, as we mentioned before, other approaches could be followed as well). There are a number of variations of the MLS algorithm; we chose to rely on its simplest version (called `Fixed Splitting'). The following exposition aligns with \citet{amrein2011variant}.

\iffalse
For clarity of the exposition, we slightly simplify the setting of our problem. Let $(X_n)_{n\in\N}$ be a Markov chain taking values in $\R^d$, and $A, B$ be measurable sets. Let $\tau_C := \inf\{n>0: X_n\in A\}$ be the time of the first visit to set $A$ and $\tau_B$ defined analogously. We are interested in estimation of the probability of visiting $B$ before reaching $A$ (starting from $X_0\sim\nu$, with $\nu(A) = 1$)
\begin{equation}\label{eq:p_MLS}
p := \p_\nu(\tau_B < \tau_A).
\end{equation}
This setting differs from the exposition in Section \ref{ss:recurrent_structure}, however it is possible to rephrase our problem such that it fits in this framework. This is analogous to the proof of Theorem \ref{thm:reccurence_cycles} by defining a new Markov chain, which keeps one-step past, i.e. $Z_n = (X_{n-1},X_n)$ and redefining sets $A$ and $B$ accordingly.
As already mentioned, when $p_B$ is small, it is inefficient to estimate it with Monte Carlo method because a lot of computational effort will be wasted on the simulation of uninteresting paths. 
\fi

\vb

As mentioned, the na\"{\i}ve Monte Carlo method is inefficient for the estimation of small probabilities, because of the computational effort wasted on simulating irrelevant paths. 
The core idea behind the MLS method is to \textit{split} the path of the process when it approaches $B$. This way, we have more control over the simulation, by forcing the process into interesting regions. In order to implement the MLS algorithm, one must first choose an \textit{importance function} $H:\R^d\to[0,1]$ which assigns an \textit{importance value} to every possible state. $H$ should be chosen such that $H(x)=1$ if and only if $x\in B$ and $H(x) = 0$ for $x\in A$. We postpone the discussion about the choice of the importance function to Section \ref{ss:choice_AIF}.

\vb

We now formally introduce the MLS algorithm. First divide the interval $[0,1]$ into $m$ subintervals with endpoints:
\begin{equation}\nonumber
0 = \ell_0 < \ell_1 < \ldots < \ell_m = 1,
\end{equation}
and define the corresponding stopping times and events
\begin{equation}\label{def:taukDk}
\tau_k := \inf\{n\geq0 : H(X_n)\geq \ell_k\},\:\: D_k := \{\tau_k < \tau_A^\text{\rm in}\};
\end{equation}
for $k\in\{0,\ldots, m\}$. Note that $\tau_k$ is the first time an importance value greater or equal to $\ell_k$ has been reached; \red{in particular $\tau_m = \tau_B$ and $\tau_0=0$, so that $X_{\tau_0}\eqd X^A_1$.} Finally let \[p_k := \p(D_k\,|\,D_{k-1}),\:\:\:k\in\{1,\ldots, m\},\] and $p_0 = 1$, \red{to which we refer as \textit{conditional probabilities}}. From the definition \eqref{def:taukDk} we have $\p(D_m) = p_B$ and since $D_0\subseteq D_1\subseteq\ldots\subseteq D_m$, we conclude
\[p_B = \prod_{k=0}^m p_k.\]
Finally, define \textit{splitting factors} \redd{$n_0,n_1,\ldots,n_{m} \in \N$}, representing the number of independent continuations of the process that are sampled when reaching the \magenta{respective} importance levels. \redd{Here $n_0$ plays a special role,} as it is a number of independent MLS estimators; the final estimator will be a mean of $n_0$ independent MLS estimators. By virtue of this independence, we are able to estimate the variance of the final estimator. \redd{For simplicity, in the following it is assumed that $n_0=1$.}

\begin{algorithm}[Multilevel Splitting]\label{alg:MLS} \ 
\begin{enumerate}
\item\label{step:quiet_ass} Set $k := 0$, $r_0 := 1$, sample $X^{1}_0 \sim X^A_1$.
\item\label{step:fs} In \magenta{the} $k$-th stage we have a sample of $r_k$ entrance states $(X^{1}_k,\ldots,X^{r_k}_k)$, where we denote $$X^{i}_k := X^{i}_{\tau^{i}_{k}}.$$ For each state $X^{i}_{k}$ generate $n_k$ independent path continuations until $\min \{\tau_{k+1},\tau^\text{\rm in}_A\}$. The number of paths for which the event $D_{k+1}$ occurred is denoted by $r_{k+1}$. Store all $r_{k+1}$ states $X^{i}_{k+1}$, for which the event $D_{k+1}$ occurred, in memory.
\item If $r_{k+1} = 0$, then stop the algorithm and put $\hat p_B := 0$, $\hat T_B := 0$.
\item If $k < m-1$, then increase $k$ by one and go back to step \ref{step:fs}; otherwise put
\begin{equation}\label{eq:p_hat}
\hat p_B := \frac{r_m}{\prod_{k=0}^{m-1} n_k}.
\end{equation}
\item If $r_m=0$, then return $\hat T_B = 0$; otherwise, for each state $X^{i}_{m}$ generate $n_m$ independent path continuations until $\tau^\text{\rm in}_A$. For each of these $r_m n_m$ continuations \magenta{record} the time spent in set $B$: $$\hat R_+^{(j)} := \sum_{k=\tau_m}^{\tau^\text{\rm in}_A-1}\ind\{X_k\in B\}.$$ Calculate \magenta{the} total time spent in $B$ by
\begin{equation}\nonumber
r_{m+1} := \sum_{j=1}^{r_m n_m}\hat R_+^{(j)}
\end{equation}
\item The final estimator is
\begin{equation}\label{eq:Tb_hat}
\hat T_B := \frac{r_{m+1}}{\prod_{k=0}^m n_k}
\end{equation}
\end{enumerate}
\end{algorithm}

\begin{theorem}\label{thm:unbiasedness}
The estimators $\hat p_B$ and $\hat T_B$, as defined in \eqref{eq:p_hat} and \eqref{eq:Tb_hat}, are unbiased estimators for $p_B$ and $T_B$ respectively.
\end{theorem}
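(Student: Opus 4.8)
The plan is to prove unbiasedness by induction on the levels, tracking the conditional distribution of the surviving entrance states. The key object is $r_k$, the number of paths that reach level $\ell_k$ before re-entering $A$. I would first establish the claim that, conditional on $r_{k-1}$, the count $r_k$ has the structure of a sum of $r_{k-1}n_{k-1}$ (conditionally) Bernoulli-type contributions, each corresponding to an independent continuation that succeeds in reaching the next level with the appropriate conditional probability. The crucial point is that in Multilevel Splitting every stored entrance state $X^i_{k}$ is distributed exactly as the first-hitting location $X_{\tau_k}$ conditioned on the event $D_k$; this is what makes the continuations from level $k$ to level $k+1$ carry precisely the conditional probability $p_{k+1}=\p(D_{k+1}\mid D_k)$. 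The Markov property guarantees that each of the $n_k$ continuations launched from such a state is an independent copy behaving as the original chain started afresh from that (conditionally distributed) point.

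Concretely, I would show by induction that $\Exp\, r_k = \big(\prod_{j=0}^{k-1} n_j\big)\prod_{j=0}^{k} p_j = p_k \cdots$ — more cleanly, that $\Exp\, r_k = \big(\prod_{j=0}^{k-1} n_j\big)\,\p(D_k)$. The base case is $r_0=1=\p(D_0)$ since $D_0$ occurs with probability one. For the inductive step, condition on the surviving states at level $k$. Each of the $r_k$ states spawns $n_k$ independent continuations, and each continuation reaches level $k+1$ before hitting $A$ with probability $p_{k+1}$, by the defining property that the stored states are distributed as the chain conditioned on $D_k$. Taking the conditional expectation gives $\Exp(r_{k+1}\mid r_k) = r_k\, n_k\, p_{k+1}$, and the tower property together with the inductive hypothesis yields $\Exp\, r_{k+1} = \big(\prod_{j=0}^{k} n_j\big)\,\p(D_{k+1})$. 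Evaluating at $k=m$ and dividing by $\prod_{k=0}^{m-1} n_k$ shows $\Exp\,\hat p_B = \p(D_m) = p_B$, which settles the first estimator.

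For $\hat T_B$, I would extend the same conditioning argument one step further. Given $r_m$ surviving states (each distributed as the chain conditioned on $D_m = \{\tau_B<\tau^{\text{\rm in}}_A\}$), each of the $r_m n_m$ continuations records $\hat R_+^{(j)}$, the time spent in $B$ until the cycle ends. By the Markov property and the stated distributional identity $R_+ \eqd (R_1\mid R_1>0)$, each $\hat R_+^{(j)}$ has mean $\Exp R_+$. Hence $\Exp(r_{m+1}\mid r_m) = r_m\, n_m\,\Exp R_+$, and combining with $\Exp\, r_m = \big(\prod_{j=0}^{m-1} n_j\big)p_B$ gives $\Exp\, r_{m+1} = \big(\prod_{j=0}^{m} n_j\big)\, p_B\,\Exp R_+$. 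Dividing by $\prod_{k=0}^m n_k$ and invoking the factorization \eqref{eq:Tb_decomposition}, $T_B = p_B\cdot\Exp R_+$, yields $\Exp\,\hat T_B = T_B$.

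The main obstacle I anticipate is making the distributional claim rigorous: that each \emph{stored} entrance state is genuinely distributed as $X_{\tau_k}$ conditioned on $D_k$, uniformly across the branching tree, so that the per-continuation success probability is exactly $p_{k+1}$ regardless of the random number $r_k$ of ancestors. This requires a careful argument that the selection mechanism (keeping only paths that reach the next level) does not bias the conditional law of the retained states, and that the independence of continuations across different ancestors and across replications is preserved under the Markov property. One clean way to handle this is to linearity of expectation over the branching structure: write $r_{k+1}$ as a sum of indicators over all $r_k n_k$ continuations and argue that the unconditional expectation of each indicator factors as $\p(D_k)$ times the per-path conditional success probability, avoiding the need to condition on the precise value of $r_k$. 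I would lean on this indicator-sum representation to sidestep the subtleties of the random branching count.
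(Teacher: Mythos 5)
Your headline inductive step is not valid in the generality of the theorem. The identity $\Exp(r_{k+1}\mid r_k)=r_k\,n_k\,p_{k+1}$ requires that, \emph{conditionally on the survivor count} $r_k$, each stored entrance state is still distributed as $X_{\tau_k}$ given $D_k$. That is false in general: siblings in the splitting tree share ancestors, so the value of $r_k$ carries information about where those ancestors crossed the levels, and conditioning on $r_k$ biases the law of the retained states. (Concretely, with $n_0=2$: given that both first-stage continuations succeeded, the root's law is reweighted by the square of its success probability, so the two stored level-one states are biased toward regions from which further progress is easier, and the per-continuation success probability need not equal $p_2$.) What you are implicitly invoking is Assumption \eqref{ass:amrein} --- exactly the hypothesis under which $(r_k\mid r_{k-1})\eqd \mathrm{Bin}(n_{k-1}r_{k-1},p_k)$ holds, cf.\ \eqref{eq:amrein_implication} --- but Theorem \ref{thm:unbiasedness} is stated, and proved in the paper, without any such assumption. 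Only the \emph{marginal} statement is true: for each fixed node of the potential splitting tree, conditional on that node's own ancestral line surviving, its entrance state has the conditional hitting law.

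The fallback you flag in your last paragraph is therefore not an optional refinement but the actual proof, and it is essentially the paper's argument. Index the full deterministic tree with $\prod_{j=0}^{k-1} n_j$ nodes at depth $k$ (the paper realizes this by ``artificially splitting'' particles that fell back into $A$, so that there are always $\prod_k n_k$ particles); each root-to-leaf path is, marginally, a copy of the original chain started from $X^A_1$, because selecting one child at each split simply follows one trajectory of the chain. Unconditional linearity of expectation over these identically distributed (but dependent) paths gives $\Exp\,r_k=\prod_{j=0}^{k-1}n_j\,\p(D_k)$, hence $\Exp\,\hat p_B=p_B$, and likewise $\Exp\,r_{m+1}=\prod_{j=0}^{m}n_j\,p_B\,\Exp R_+$, which together with \eqref{eq:Tb_decomposition} gives $\Exp\,\hat T_B=T_B$. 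The paper's evolving particle weights $1/\prod_k n_k$ are precisely bookkeeping for this linearity argument. So: carry out the indicator-sum computation with no conditioning on $r_k$ anywhere, and drop the conditional-expectation induction, which cannot be repaired without strengthening the hypotheses of the theorem.
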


The following proof is based on notes of the {\it Summer School in Monte Carlo Methods for Rare Events} that took place at Brown University, Providence RI, USA in June 2016 (authored by J.\ Blanchet, P.\ Dupuis, and H.\ Hult). It is noted that various alternative derivations can be constructed; see e.g.\ \cite{asmussen2007stochastic}.

\begin{proof}[Proof of Theorem \ref{thm:unbiasedness}]
\iffalse
Unbiasedness of $\hat p_B$ is covered in literature in a slightly different setting, with $\tau_A$ instead of $\tau_A^\text{\rm in}$. The proof however is analogous. See \citet{asmussen2007stochastic}. We give a very intuitive proof here. 
\fi
Let $\bar X_{i,j}$ be labeling  all descendants of the original particle, with $i$ indexing time and $j$ indexing the descendant. All descendants $\bar X_{\cdot,j}$ are identically distributed (but not independent). Now suppose that each particle has an evolving weight $w_{i,j}$. Concretely, this means that when a particle crosses a threshold $\ell_k$, it is split into $n_k$ particles and its weight is divided equally among its descendants (i.e., each of them obtaining a share $1/n_k$ of $w_{i,j}$). Each particle that reaches the set $B$ has been split $m$ times, and its weight is thus $1/\prod_{k=1}^m n_k$. For particles that did not reach set $B$, we artificially split these particles (keeping them in $A$) for the remaining thresholds so that the total number of particles is $\prod_{k=1}^m n_k$, each of equal weight. Then, using the fact that the descendants are identically distributed, we obtain
\begin{align*}
\Exp \hat T_B & = \Exp \Bigg( \sum_{j=1}^{\prod_{k=1}^m n_k} \frac{1}{\prod_{k=1}^m n_k} \sum_i \ind \{\bar X_{i,j} \in B\}\Bigg) = \Exp \sum_i \ind \{\bar X_{i,1} \in B\} = T_B.
\end{align*}
Analogously, $\Exp\hat p_B = p_B$, which ends the proof.
\end{proof}

We remark that, with $r_1,\ldots,r_m$ as defined in Algorithm \ref{alg:MLS},  the same arguments as the ones featuring in the proof of Thm.\ \ref{thm:unbiasedness} imply the unbiasedness of the estimators for $\p(D_k)$:
\begin{equation}\label{eq:unbiased_Pk}
\Exp \bigg(\frac{r_k}{\prod_{i=0}^{k-1}n_i}\bigg) = \p(D_k) = p_1\cdots p_k.
\end{equation}

\subsection{Estimation of $\gamma$}\label{ss:estimation_gamma}

As already mentioned at the beginning of Section \ref{s:algorithm}, the final estimator for $\gamma$ \dc{is the} product $\hat\gamma := \hat\a_A\cdot\hat T_B$. In the description the MLS algorithm, in Step \ref{step:quiet_ass}, we tacitly assumed that we \magenta{can sample the recurrency cycle origin $X^A_1$}. \magenta{As this is typically not the case, we sample $X_1^A$ approximately, in the following way.} During the estimation of $\a_A$ with the batch-means method \eqref{eq:alpha_BM} we store each inwards crossing to the set $A$ and we bootstrap these states in Step \ref{step:quiet_ass} of Algorithm \ref{alg:MLS}. We thus end up with the following algorithm for estimating the rare-event probability $\gamma$, as defined in \eqref{def:gamma}.

\begin{algorithm}[Recurrent Multilevel Splitting]\label{alg:RMS}\
\begin{enumerate}
\item Choose a recurrency set $A$ satisfying the assumptions of Theorem $\ref{thm:reccurence_cycles}$ and an importance function $H:\R^d\to[0,1]$.
\item\label{step:aA} Estimate $\a_A$ using the batch-means method $\eqref{eq:alpha_BM}$, and return $\hat\a_A$. Store the locations of the cycle origins in the set $\mathcal S_\text{\rm rec} := \{X^A_1,X^A_2,\ldots\}$.
\item\label{step:Tb} Estimate $T_B$ using the Multilevel Splitting algorithm (Algorithm $\ref{alg:MLS}$); in Step $\ref{step:quiet_ass}$ sample the origin $X_0^1$ uniformly from $S_{\rm rec}$. The output is $\hat T_B$.
\item The final estimator is
\begin{equation}\label{def:rec_estimator}
\hat \gamma := \hat\a_A \cdot \hat T_B
\end{equation}
\end{enumerate}
\end{algorithm}

It is assumed that the set $S_\text{rec}$ is `representative enough' to make sure  that resampling from $S_\text{rec}$ can be interpreted as taking i.i.d.\ \dc{samples} of $X^A_1$ in the stationary regime. \red{Under this assumption, the estimators $\hat\a_A$, $\hat T_B$ are independent and the variance of $\hat\gamma$ can be inferred using the sample variance of $\hat\a_A$ and $\hat T_B$. However, in our numerical experiments in Section \ref{s:experiments} we do not assume this independence \dc{to get an estimate of the variance. Instead we run Algorithm \ref{alg:RMS} multiple times, resulting in multiple estimates $\hat \gamma$ from which we} obtain a reliable estimate for the variance of $\hat\gamma$. \magenta{For implementation details, see Section \ref{ss:implementation_details}.}}

\section{Choice of Parameters}\label{s:choice_of_parameters}

In a rare-event setting, both the expectation and the variance of an estimator are very small, so that the variance itself is not a meaningful measure of accuracy. Instead, it makes sense to look at its value relative to the expectation, i.e., the {\it Relative Error} (RE):
\[\RE^2(\hat\gamma) := \Exp(\hat\gamma-\gamma)^2/\gamma^2.\] An estimator with a lower relative error is not necessarily preferred; a more meaningful criterion involves the corresponding total computational time (or: {\it workload}), which we denote $W(\hat\gamma)$; \red{see the beginning of Section \ref{ss:implementation_details} for more details. In the following section we consider a setting, in which we can derive \dc{optimal parameters of the MLS estimator by minimizing the workload under a constraint on the relative error (i.e., $\RE^2(\hat\gamma) \leq \rho$ for a given accuracy $\rho>0$).}}

\subsection{\red{Simplified Setting}}\label{ss:simplified_setting}

\red{\redd{Due to possible dependencies between the number of successes $r_1,\ldots,r_m$,} there is no tractable general expression for the variance of MLS estimator.} A typical assumption made in the literature is to assume some sort of independence between them, and to study the variance afterwards. With $\tau_k,D_k$ defined as in \eqref{def:taukDk} and $R_+$ as defined in \eqref{def:Rp}, \magenta{we assume}
\begin{enumerate}[label=\assenum*,ref=\assenumref*]
\item\label{ass:amrein} for all $k\in\{1,\ldots,m\}$,
\begin{equation}\nonumber
\p(D_k \mid D_{k-1},X_{\tau_{k-1}}) \equiv \p(D_k\mid D_{k-1}) = p_k
\end{equation}
\item\label{ass:trare} for all $X_{\tau_m}$,
\begin{equation}\nonumber
\big(R_1 \mid R_1>0, X_{\tau_m}\big) \eqd (R_1 \mid R_1>0) =: R_+
\end{equation}
\end{enumerate}
Assumption \eqref{ass:amrein} has been proposed in \cite{amrein2011variant}. It states that the probability of reaching the $k$-th importance level, given the $(k-1)$-st level has been reached, is constant over all possible entrance states. Assumption \eqref{ass:trare} states that the time spent in \magenta{the} rare set $B$ within a cycle, conditioned \magenta{on} the set $B$ has been reached, does not depend on the position of the entrance state \dc{to $B$. In principle, we have the possibility} \red{to choose \magenta{the} set $A$ and \magenta{the} importance function $H(\cdot)$ such that Assumption \eqref{ass:amrein} is satisfied; see the discussion in Section \ref{ss:choice_AIF}. Whether Assumption \eqref{ass:trare} holds or not is \magenta{effectively} problem specific, \magenta{in the sense that we do not have control over it due to the fact that the set $B$ is given}. We argue that for a large class of problems there exists a most likely point of entry $X_{\tau_B}$ to $B$, which implies  \eqref{ass:trare} approximately. We emphasize that \magenta{Assumptions} \ass\ are not required \dc{for the RMS algorithm to work,} but if they are fulfilled, optimality results can be derived.} Under \ass\ we find the squared relative error of $\hat T_B$:
\begin{equation}\label{eq:SRE_Tb}
\RE^2(\hat T_B) = \sum_{k=1}^{m} \frac{(1-p_k)/p_k}{\prod_{j=0}^{k-1}n_jp_j} + \frac{\RE^2(R_+)}{\prod_{j=0}^{m}n_jp_j}.
\end{equation}
We derive \eqref{eq:SRE_Tb} in Appendix \ref{appendix:technical_results}. \red{Following the approach of \citet{amrein2011variant}, in Appendix \ref{appendix:optimal_parameters} we derive the optimal parameters $m,p_1,\ldots,p_m,n_0,\ldots,n_m$ for the MLS algorithm; \magenta{here, optimality refers to the property} that the expected computational time is minimized under the constraint for the relative error \magenta{$\RE^2(\hat T_B)\leq \rho$} for a given accuracy $\rho>0$.} It is worth noting that the optimal number of thresholds $m$ is roughly equal to $|\log p_B|$ with conditional probabilities $p_k$ all equal to approximately $0.2$. What is more, the optimal solution satisfies $n_k p_{k+1} = 1$ for $k\in\{1,\ldots,m-1\}$, so we can choose $n_k = 5$. This so-called \textit{balanced growth} (see \cite{garvels2000splitting}) ensures that, on average, $n_0$ paths are sampled in each stage of the algorithm (with an exception of the last stage, which corresponds to the estimation of $R_+$). The optimal workload reads
\begin{equation}\label{eq:optimal_workload}
W(\hat T_B) = \frac{1}{q}\bigg(\frac{c\,|\log p_B|}{\sqrt{2c-1}} + \RE(R_+)\bigg)^2
\end{equation}
with a constant $c$ defined as below display \eqref{eq:optimal_parameters}. As already mentioned, \magenta{a rigorous} derivation of this result can be found in Appendix \ref{appendix:optimal_parameters}, and \magenta{the} exact values of \magenta{the} optimal parameters $m,p_1,\ldots,p_m,n_0,\ldots,n_m$ in Eq.\ \eqref{eq:optimal_parameters}. \red{In all our numerical experiments in Section \ref{s:experiments}, we spend \magenta{an initial portion of computational time} on a rough estimation of $p_B$ and $\RE(R_+)$ in order to find \magenta{a sufficiently accurate approximation of} the optimal parameters. See Section \ref{ss:implementation_details} for \magenta{a more detailed account of the implementation details.}}

The optimal workload in \eqref{eq:optimal_workload} is proportional to $(\log p_B)^2$, which  offers  a huge gain in efficiency, compared with the Monte Carlo method \eqref{eq:MC_workload} (whose workload is inversely proportional to $p_B$). We derive efficiency results in Appendix \ref{appendix:efficiency}; \magenta{in particular, Theorem \ref{thm:logeff_rms} proves that RMS is logarithmically efficient under specific assumptions}.

%----------------------------------------------------------------

\subsection{Choice of Recurrency Set and Importance Function}\label{ss:choice_AIF}

In Section \ref{ss:simplified_setting} we have seen that under Assumptions \ass, the MLS method is particularly efficient. \red{As already mentioned, the \magenta{level up to which Assumption \eqref{ass:amrein} is fulfilled} depends on both the choice of the recurrency set and the importance function; we thus aim to choose $A$ and $H(\cdot)$ in such a way that \eqref{ass:amrein} is approximately satisfied.} At the same time, we would like to choose $A$ so as to maximize $\a_A$, so that the batch-means estimator $\hat \a_A$ (as defined in \eqref{eq:alpha_BM}) is computationally efficient as well. These two requirements are often conflicting and one must in the end strike a proper balance between them.

For each $k$, Assumption \eqref{ass:amrein} concerns the choices of both $A$ and $H(\cdot)$. \dc{However, it} implies a property that relates to the choice of $A$ only, namely, the probability of reaching set $B$ within a recurrency cycle is independent of the initial point:\[\p(\tau_B<\tau^\text{in}_A \mid X^A_1) \equiv p_B.\] 
\dc{Thus, Assumption \eqref{ass:amrein} implies that}
\begin{equation}\label{ass_A}
X^A_1 \overset{\rm d}{=} \big(X^A_1\mid R_1>0\big) =: X^A_+;
\end{equation}
informally, there is independence between the origin of the cycle on one hand, and the random variable $\ind\{R_1>0\}$ (indicating whether set $B$ has been reached within a cycle) on the other hand. \red{Intuitively, the smaller the set $A$ is, the more closely \eqref{ass_A} is satisfied but also, the smaller $\a_A$ is. In particular, \eqref{ass_A} trivially holds when $A$ consists of one point only, but then $\a_A = 0$.} In Section \ref{sss:OU2} we give an example of a setting in which \eqref{ass_A} is violated, but one can imagine that in many situations \eqref{ass_A} `roughly holds'.  \dc{Thus, for practical purposes, it is desirable that \emph{the set $A$ maximizes $\a_A$ while it also approximately satisfies  \eqref{ass_A}}. In full generality, it is not an easy task to \magenta{fulfill} both aims.}

A poorly chosen importance function will lead the split particles into uninteresting regions, or it will force the paths to hit the rare set in an unlikely fashion. This potentially leads to low efficiency of the MLS algorithm. Given that we have already chosen a set $A$ satisfying \eqref{ass_A}, there exists an importance function guaranteeing  \eqref{ass:amrein} to be satisfied:
\begin{align*}
H(x) := \p_x(\tau_B\leq\tau^\text{in}_A),
\end{align*}
Of course this insight is of theoretical value only: if we knew the quantity on the right hand side, then we would not even have to use the MLS algorithm. However, also \[H_g(x) := g(\p_x(\tau_B\leq\tau^\text{in}_A)),\] with $g:[0,1]\to\R$ any increasing function, satisfies \eqref{ass:amrein}. This already gives a helpful guideline for the choice of $H$. Namely, \emph{the states from which it is more likely to visit $B$ before returning to $A$ should have larger importance}. When an approximation or asymptotic behavior of $\p_x(\tau_B\leq\tau_A^\text{in})$ is available it might be useful to use it as an importance function. In \citet{dean2009splitting} a large-deviations based approach to the choice of importance function is discussed.

Sometimes, a so-called \textit{distance-based} importance function can be a good choice. This function is basically \[H(x) :=\text{\rm dist}(x,B) = \inf\{\|x-a\|:a\in B\},\] normalized in such a way that $H(x)=1$ iff $x\in B$ and $H(x)=0$ for $x\in A$. This importance function can be a good choice for systems whose paths conditioned on $\{\tau_B<\tau^\text{in}_A\}$ are effectively gradually driven towards $B$. \redd{In contrast}, distance-based importance function will be a poor choice for systems for which it is most likely to reach rare set $B$ by first getting away from it. In Section \ref{s:experiments} we include examples of problems for which a distance-based importance function is a good choice, but also one in which it does not work \magenta{well}. 

In some cases we may have already chosen \dc{a particular} \textit{shape} of the set $A$ (e.g.\ an ellipsoid, half-space, or multidimensional cube) which can be parametrized by a single parameter $\ell\in\R$. Even better, if we have already chosen an importance function, then a level set \[A(\ell) = \{x\in\R^d : H(x)\leq \ell\}\] could be a good choice. In any case, we should choose $\ell$ to maximize $\alpha_{A(\ell)}$. We propose to use a crude estimator to find $\ell^*$: we find a maximizer of $\alpha_{A(\ell)}$ by putting
\begin{equation}\label{eq:lstar}
\hat\ell^* := \argmax\bigg\{\sum_{n=0}^{N} \ind\{X_n \not\in A(\ell), X_{n+1} \in A(\ell)\}\bigg\}.
\end{equation} 

\textit{Quantile validation.} While it is not clear in general how to choose $A$ such that it satisfies \eqref{ass_A}, one can statistically test whether \eqref{ass_A} holds after the choice of $A$ has been made. We now propose one particular method to do so that can be used in combination with the RMS algorithm. In Step \ref{step:aA} of Algorithm \ref{alg:RMS} calculate and store the \textit{maximum importance attained within cycles}, i.e., \[H_k^\text{\rm max} := \max\{H(x) : x\in \mathcal C_k\},\] with $\mathcal C_k$ as defined in \eqref{def:cycles}. Assuming a good importance function has been chosen, the cycle origins corresponding to the highest importance should also be approximately distributed as $X^A_+$. This gives us means of comparing the distributions of $X^A_1$ and $X^A_+$. Let $N_\text{rec}$ be the total number of pairs $(X^A_k,H^\text{max}_k)$ obtained in Step \ref{step:aA} of Algorithm~\ref{alg:RMS}. Let \[\sigma:\{1,\ldots,N_\text{rec}\}\to\{1,\ldots,N_\text{rec}\}\] be a permutation ordering $(H^\text{max}_k)_{1\geq k\geq N_\text{rec}}$ into a non-decreasing sequence, i.e.,
\[H^\text{max}_{\sigma(1)} \leq H^\text{max}_{\sigma(2)} \leq \ldots \leq H^\text{max}_{\sigma(N_\text{rec})}\]
Now choose a \magenta{$q\in(0,1)$} and let
\begin{equation}\label{def:Sq}
S^q_\text{rec} := \big\{ X^A_{\sigma([(1-q)N_\text{rec}])},\ldots,X^A_{\sigma(N_\text{rec})}\big\}
\end{equation} 
That is, $S^q_\text{rec}$ is a subset of $S_\text{rec}$ which contains the cycle origins corresponding to \magenta{the fraction $q$ of values with highest importance}. In particular $S^{1}_\text{rec} = S_\text{rec}$. Then $S_\text{rec}$ and $S_\text{rec}^q$ (for small $q$) can be thought of as sets of samples from the random variables $X^A_1$ and $X^A_+$, respectively. Various tests can now be performed, to  compare e.g.\ the means or variances; alternatively   QQ-plots can be made, or histograms can be compared.

\section{Numerical Experiments}\label{s:experiments}

The aim of this section is to test the RMS method on a series of \magenta{specific} examples. The examples range from simple cases, where the ground truth is known, to more complicated dynamical systems, \dc{where the ground truth is unknown and we can only compare to estimates obtained} with Monte Carlo (MC) methods. In Section \ref{sss:OU2} we also carefully look into an example where the RMS method (with a \magenta{na\"{\i}ve} choice of \magenta{the} importance function) does not perform \magenta{that well}; we discuss why this was \magenta{to be} expected. It will be seen throughout that RMS is superior to MC in terms \magenta{of the} computational time needed to achieve a \dc{desired} level of accuracy; in extreme cases, like in Section \ref{ss:franzke}, the RMS method can be three orders of magnitude faster than MC (and the \dc{efficiency gain} is expected to be even greater as $\gamma$ decreases).

\subsection{Implementation Details}\label{ss:implementation_details}

As already mentioned in Section \ref{s:choice_of_parameters}, the relative error of an estimator is not always a meaningful measure of \magenta{its} performance, as it does not take the workload into account. We \magenta{therefore} compare RMS with MC using the ratio of \textit{work normalized squared relative errors}; see e.g. \cite{kroese2013handbook}. In particular, we define
\begin{equation}\label{def:eff}
\Eff(\hat\gamma) = \frac{W(\hat\gamma^{\rm MC})}{W(\hat\gamma)} \cdot \frac{\RE^2(\hat\gamma^{\rm MC})}{\RE^2(\hat\gamma)}.
\end{equation}
\iffalse
Let $\hat\gamma^{(1)},\ldots,\hat\gamma^{(N)}$ be $N$ independent copies of an estimator $\hat\gamma$. Let $\hat\gamma_N = \tfrac{1}{N}\sum_{i=1}^N\hat\gamma^{(i)}$; then
\[\RTV(\hat\gamma_N) = \RTV(\hat\gamma).\]
Since increasing the number of independent copies of the estimator does not change its relative time variance product, then we can define the \textit{efficiency gain},
\fi
This value can be interpreted as \dc{the ratio of the computational cost of MC to the cost of RMS when both methods reach the same accuracy (same relative error)}. Clearly, the larger $\Eff(\hat\gamma)$ is, the more efficient the RMS method is in comparison with Monte Carlo. 

\vb

In each of our experiments, the underlying Markov chain $(X_n)_{n\in\N}$ represents the numerical solution to a $d$-dimensional Stochastic Differential Equation (SDE) using an explicit Euler scheme, with \redd{time step} $h>0$; see e.g.\ \cite{kloeden1992numerical}. \redd{We remark that the time discretization potentially has a significant effect on a the underlying value of $\gamma$, especially in the rare-event setting; see the recent systematic study \cite{bisewski2018simulation}. However, in the context of this article we only focus on discrete recursions  that arise from numerical time integration schemes. For these recursions we compare RMS with the corresponding Monte Carlo results; we do not aim at studying the behavior as $h\downarrow 0$.}

\vb

\redd{Notice that our method relies on properties of discrete-time processes, in particular in the definition of the recurrency cycles. More specifically, in the corresponding continuous-time model recurrency cycles are ill-defined,  as a set may be entered and left infinitely often in a time interval of finite length. This feature could potentially lead to computational issues when working with a small time step $h$.  However, one can easily circumvent the problem and still integrate the process with arbitrarily small $h_0$ but store values every $h>h_0$. Note that the discretization error depends only on $h_0$ (and not $h$), since $h_0$ determines the stationary distribution. In fact, this is what we do in Section \ref{ss:franzke}, where the process is integrated with $h_0 = 10^{-4}$ but it is stored only every $h=10^{-2}$.}

In each experiment the rare event $B$ is a half-space parametrized by $u\in\R$:
\begin{equation}\nonumber
B(u) = \{(x_1,\ldots,x_d)\in\R^d : x_1\geq u\}.
\end{equation}
In other words, the probability under consideration \magenta{corresponds to the the first dimension
attaining high values in stationarity:}
\begin{equation}\label{def:gamma_u}
\gamma(u) := \p_\mu(X_0\in B(u))
\end{equation}
for large $u$. \dc{Furthermore, in each experiment we choose the recurrency set $A$ to be} a half-space parametrized by $\ell$ (\magenta{where} the value of $\ell$ is chosen depending on the particular experiment):
\begin{equation}\label{def:A_u}
A(\ell) = \{(x_1,\ldots,x_d)\in\R^d : x_1\leq \ell\}.
\end{equation}
We use a distance-based importance function, i.e.,
\begin{equation}\label{def:IF_u}
H(x_1,\ldots,x_d) = \begin{cases}
0, & x_1\leq 0 \\
x_1/u, & x_1\in(0,u) \\
1, & x_1\geq u
\end{cases}
\end{equation}
%\newcommand{\numset}{{\rm(\ref{def:B_u}-\ref{def:IF_u})}}%
%Each numerical experiment is carried out in the following way.
%First, we estimate $\gamma(u)$ using Monte Carlo with a time-lag determined in the following fashion: first we roughly estimate the temporal autocorrelation from a time series of the process in the stationary regime, i.e.
%\[c(k) := \Cov_\mu(X_0,X_n)/\Var_\mu(X_0)\]
%for $k=\{1,2,\ldots\}$. The time-lag for the Monte Carlo is then defined as
%\[n_\text{lag} := 10 \cdot (\max\{k>0 : |c(k)|\geq 0.2\}+1)\]
%So $n_\text{lag}$ is 10 times bigger than the time needed for the autocorrelation to drop below 0.2.
We now provide more details on our implementation of Algorithm \ref{alg:RMS}. In Step \ref{step:aA}, we estimate $\a_A$ using the method of batch means as in \eqref{eq:alpha_BM}; the number of iterations of the Markov chain $N$ is chosen such that $S_{\rm rec}$ consists of roughly $10^4$ inwards crossings of $A$. In Step \ref{step:Tb}, we want to choose parameters $m,n_0,\ldots,n_m,\ell_1,\ldots,\ell_m$ for the Multilevel Splitting in such a way that the workload is minimized and the resulting estimator satisfies
\begin{equation}\label{eq:desired_RE}
\RE(\hat T_B)=5\cdot10^{-3}.
\end{equation}
We run a pilot MLS with many intermediate thresholds ($m=20$). The pilot gives us rough estimates of $p_B$, $T_B$ and $\RE(R_+)$. We put the number of thresholds $m$ and splitting factors $n_0,\ldots,n_m$ as in \eqref{eq:optimal_parameters}; we emphasize that the optimal $n_0$ is also determined by the desired squared relative error $\rho$. We find the intermediate thresholds $\ell_1,\ldots,\ell_m$ following the log-linear interpolation approach from \citet{wadman2014separated}. Assuming \ass\ are satisfied, the MLS method with these parameters should give the desired relative error, as in \eqref{eq:desired_RE}. We note that in the pilot we use the variant of MLS called `Fixed Number of Successes' developed by \citet{amrein2011variant}.

\vb

The final estimator $\hat\gamma$ is the mean of $N=100$ independent replicas \dc{$\hat\gamma^{(1)}, \ldots,\hat\gamma^{(N)}$} of the RMS estimator \dc{(\ref{def:rec_estimator})} with parameters as discussed above; i.e.
\begin{equation}\nonumber
\hat\gamma := \frac{1}{N}\sum_{i=1}^N \hat\gamma^{(i)}
\end{equation}
This additional `Monte Carlo wrapper' around the RMS method enables us to approximate the relative error $\RE(\hat\gamma)$ with
\begin{equation}\label{def:sampleRE}
\RE^2(\hat\gamma) \approx \frac{1}{N-1}\sum_{i=1}^N \bigg(\frac{\hat\gamma^{(i)}}{\hat\gamma} -1\bigg)^2,
\end{equation}
and \dc{we can approximate $\RE(\hat\a_A)$ and $\RE(\hat T_B)$ in a similar way}. For each experiment we present a table with results corresponding to  multiple values of the threshold $u$. Each table displays the final estimator $\hat\gamma$ as well as its estimate for $\RE(\hat\gamma)$, as in \eqref{def:sampleRE}, and $\Eff(\hat\gamma)$, as in \eqref{def:eff} based on the run of an MC estimator $\hat\gamma^{\rm MC}$.

\vb

Various checks can be done in order to assess the reliability of the estimator $\hat\gamma$. In each table we additionally give the estimate for $\RE(\hat T_B)$; if it matches the desired relative error, i.e. $\RE(\hat T_B)\approx 5\cdot10^{-3}$, \magenta{then} \dc{this is} an indication that Assumptions \ass\ are satisfied. When $\RE(\hat T_B)$ is larger than desired, it might be a result of poorly chosen intermediate thresholds $\ell_1,\ldots,\ell_m$; we propose to verify, after the algorithm has been executed, whether the estimates for all the intermediate probabilities $p_1,\ldots,p_m$ roughly equal the optimal $p_\text{opt}\approx 0.20$. If this is the case and we still get a particularly large $\RE(\hat T_B)$, this is an indication that either the recurrency set or the importance function have not been properly chosen. In case of violation of the former, in Section \ref{ss:choice_AIF} we proposed a test for the appropriateness of the choice of the set $A$. Additional verification can be performed to assess whether resampling from the set $S_{\rm rec}$ obtained in Step \ref{step:aA} of the RMS algorithm \dc{is a good approximation of taking i.i.d. samples} of $X^A_1$. This implies that $\hat\a_A$ and $\hat T_B$ are independent, but if they are independent then necessarily
\begin{equation}\label{eq:RE_calc}
\RE^2(\hat\gamma) = \RE^2(\hat\a_A) + \RE^2(\hat T_B) \, .
\end{equation}
\dc{Thus, if (\ref{eq:RE_calc})} is not approximately satisfied, it is an indication that $S_{\rm rec}$ \dc{does not represent the distribution of $X_1^A$ well}. We emphasize that the relative error of $\hat\gamma$ presented in \magenta{the} tables is calculated as in \eqref{def:sampleRE}.

%------------------------------------------------------------------------

\subsection{Ornstein-Uhlenbeck Process}\label{ss:OU}

Let $(X_t)_{t\geq0}$ be a \magenta{$d$}-dimensional Ornstein-Uhlenbeck process ($d$-dim OU), i.e., a process taking values in $\R^d$ \magenta{solving the SDE}
\begin{equation}\label{def:OU_sde}
\d X_t = - Q X_t\d t + \d W_t
\end{equation}
with $Q\in\R^{d\times d}$ and $(W_t)_{t\geq0}$ denoting a standard $d$-dimensional Wiener process. \magenta{Applying the explicit Euler numerical scheme  to \eqref{def:OU_sde}}, with time step $h>0$ yields
\begin{equation}\label{def:OU_euler}
X_{n+1} = (I - Qh)X_n + Z_n,
\end{equation}
with \magenta{$I$ the $d$-dimensional identity matrix $I$}, and \magenta{$Z_1,Z_2,\ldots$ i.i.d.\ $d$-dimensional standard normal random variables}. It is known \cite{schurz1999invariance} that the stationary distribution $\mu$ of \eqref{def:OU_euler} exists if there exists a positive-definite matrix $M = (M_{ij})_{i,j\in\N}$ solving
\begin{equation}\label{eq:M}
M = (I-Qh)M(I-Qh)^\top + hI;
\end{equation}
then the stationary distribution $\mu$ is $d$-dimensional \magenta{centered} normal with covariance matrix $M$. The rare event of our interest is \dc{the exceedance of a high threshold in the first dimension under} the stationary distribution (of the discrete-time Markov chain  in \eqref{def:OU_euler}), as in \eqref{def:gamma_u}. Eq.\ \eqref{eq:M} is a well-known Sylvester equation and its solution $M$ can be found numerically, \magenta{so that $\gamma(u)$ can be evaluated as}
\begin{equation}\label{eq:gamma_OU_explicit}
\gamma(u) = \Phi(-u/\sqrt{M_{11}}),
\end{equation}
with $\Phi(\cdot)$ the standard normal cdf. Knowing \magenta{the ground truth} $\gamma(u)$ gives us means to determine how accurate the RMS estimator $\hat\gamma$ is.

\vb

In the following three subsections we study the OU process with different sets of parameters but with the same choice of the recurrency set and importance function, as in \eqref{def:A_u} and \eqref{def:IF_u}. First, we study the simplest case of a one-dimensional OU process. \magenta{This is an `ideal' example in the sense that} Assumptions \ass\ are \dc{(approximately)} satisfied. Second, we study a multidimensional OU process; while the simplifying assumptions do not seem to be satisfied, they are `close enough' \dc{for} the RMS method to  give satisfactory results. The third case describes a two-dimensional OU process with the matrix $Q$ chosen such that Assumptions \ass\ are not satisfied \dc{\magenta{for} our choice of \magenta{the} recurrency set and \magenta{the} importance function}.

%-------------------------------------------------------------------

\subsubsection{1-dim OU}\label{sss:OU1}

In this experiment we put $d=1$, $Q=1$, $h=0.01$. The recurrency set $A(\ell)$ and importance function $H(\cdot)$ are as in \eqref{def:A_u} with $\ell=0$ and \eqref{def:IF_u} respectively.

\dc{If we would study the stationary distribution of the original SDE driven by \eqref{def:OU_sde} (rather than the time-discrete numerical solution in \eqref{def:OU_euler}), \magenta{then} the paths of the process would be continuous and thus $X^A_1 = 0$ a.s. Moreover, because of their continuity, these paths must cross all intermediate states $x\in(0,u)$ before reaching $B$. Therefore $x\mapsto \p_x(\tau_B<\tau^\text{in}_A)$ is an increasing function, implying that the distance-based importance function satisfies \eqref{ass:amrein} in the continuous-time case. By similar arguments, $X_{\tau_B} = u$ a.s., and hence \eqref{ass:trare} is satisfied as well in that case.}

%Consider for a moment that we study the stationary distribution of the original SDE driven by \eqref{def:OU_sde} and not the time-discrete numerical solution in \eqref{def:OU_euler}. The paths of the original process are continuous, thus $X^A_1 = 0$ a.s. What is more, in order to reach set $B = [u,\infty)$, the path must first cross all intermediate states $x\in(0,u)$ so $x\mapsto \p_x(\tau_B<\tau^\text{in}_A)$ is an increasing function. This implies that distance-based importance function satisfies \eqref{ass:amrein}. Moreover, the continuity argument also implies $X_{\tau_B} = u$ a.s.; since there is only one possible entry state to the rare event, \eqref{ass:trare} follows.

The Markov chain driven by \eqref{def:OU_euler} is a \magenta{discrete-time} approximation of \eqref{def:OU_sde}, so the assumptions  will not be satisfied \textit{exactly}. In particular, \dc{we note} that for any time step $h>0$, the support of $X_{\tau_B}$ is \dc{the entire} halfline $[u,\infty)$ because in principle the process can exceed the threshold $u$ by any positive value upon the first entry. This shows that Assumption \eqref{ass:trare} is not satisfied. \magenta{An} analogous argument can be used to show that  \magenta{Assumption} \eqref{ass:amrein} is not satisfied \magenta{either}. Nonetheless, for \magenta{a} small time step $h>0$, extreme overshooting upon the first entry \dc{(i.e., $X_{\tau_B}$ \magenta{being} significantly larger than $u$, or $X_{\tau_k}$ significantly larger than $\ell_k u$) is very unlikely. \magenta{We conclude that the} assumptions are} satisfied approximately.

Since the value of $\gamma(u)$ can be evaluated using \eqref{eq:gamma_OU_explicit}, we chose the thresholds $u$ to match the desired value of $\gamma(u)$, as in Table \ref{tab:1dimOU}. The results show that $\RE(\hat T_B) \approx 5\cdot10^{-3}$, as desired in \eqref{eq:desired_RE}; this is a good indication that Assumptions \ass\ are satisfied. Also, the relative error calculated under the independence assumption via \eqref{eq:RE_calc} matches the estimated $\RE(\hat\gamma)$.

\textit{Conclusions.} \magenta{In this setting} the RMS algorithm is very efficient, as compared with MC. The numerical results agree very well with \magenta{the theoretical outcomes}, \magenta{confirming our observation that Assumptions \ass\ are approximately satisfied.}

\begin{table}[tb]
\centering
\begin{tabular}{c|ccccc}
$\gamma(u)$        &  $10^{-3}$  & $10^{-4}$  & $10^{-5}$ & $10^{-6}$  & $10^{-7}$  \\
\hline
$\hat\gamma$ & $9.94\cdot 10^{-4}$ & $9.93\cdot 10^{-5}$ & $9.96\cdot 10^{-6}$ & $9.96\cdot 10^{-7}$ & $9.96\cdot 10^{-8}$ \\ 
$\RE(\hat\gamma)$ & 3.95e-03 & 5.45e-03 & 6.53e-03 & 6.31e-03 & 5.49e-03 \\ 
\hline\hline 
$\Eff(\hat\gamma)$ & 4.1 & 8.9 & 45.2 & 378.9 & 1836.2 \\ 
$\RE(\hat T_B)$ & 3.90e-03 & 4.99e-03 & 6.42e-03 & 6.30e-03 & 5.32e-03
\end{tabular}
\caption{RMS algorithm for an 1-dim OU process. Parameters: $Q=1$, $A = \{x_1 \leq 0\}$, $B = \{x_1\geq u\}$; $u$ has been chosen using \eqref{eq:gamma_OU_explicit} to match the values of $\gamma$ in the first row. We have $\hat\a_A = 0.0225$ and $\RE(\hat\a_A) = 1.66\cdot10^{-3}$.
	}\label{tab:1dimOU}
\end{table}

%----------------------------------------------------------------------

\subsubsection{10-dim OU, $Q$ with real eigenvalues}\label{sss:OUd}

In this experiment we put $d=10$, $h=0.01$. The matrix $Q = (Q_{ij})_{ i,j\in\{1,\ldots,d\}}$ is randomly generated such that all its eigenvalues are real. The recurrency set $A(\ell)$ and importance function $H(\cdot)$ are as in \eqref{def:A_u} with $\ell=0$ and \eqref{def:IF_u} respectively. 

In Fig.\ \ref{fig:10dimOU_path_hit_18} we plot four randomly chosen recurrency cycles, projected onto the first and second dimension, which have reached the rare event $B$. These conditional paths seem to follow a linear pattern; similar behavior is \dc{seen in other projections (not shown)}. This indicates that attaining high values in the first dimension is coupled with attaining high values in the second dimension (and similar statements can be made about other dimensions). Therefore, the distance-based importance function is not expected to satisfy \eqref{ass:amrein}, as it does not take this behavior into account; an ideal importance function should give larger importance to states which attain \textit{simultaneously} high values in the first and second dimension. While the distance-based importance function is not the most appropriate choice, it is still expected to give satisfactory results, as it \magenta{drives} the paths gradually towards the rare event.

The results of the RMS algorithm are presented in Table~\ref{tab:10dimOU}. It can be seen that the values of $\RE(\hat T_B)$ \dc{do not} exactly match the desired value $5\cdot10^{-3}$ in \eqref{eq:desired_RE}, which in view of the earlier discussion is not surprising, as we did not expect Assumptions \ass\ to hold. However, \dc{the estimates $\hat\gamma$ are still very accurate, and}
\magenta{the efficiency is still excellent (relative to the MC method)}.

\vb

\textit{Conclusions.} \magenta{This experiment shows that the RMS algorithm can be effectively implemented in a multidimensional setting}, even when Assumptions \ass\ are violated. This underscores the robustness of the distance-based importance function.
%{\color{green} [I WOULD LEAVE THIS OUT - DC]: in a setting, in which the paths gradually drift towards the rare event.}

\begin{figure}[tb]
        \centering
       \includegraphics[width=.7\linewidth]{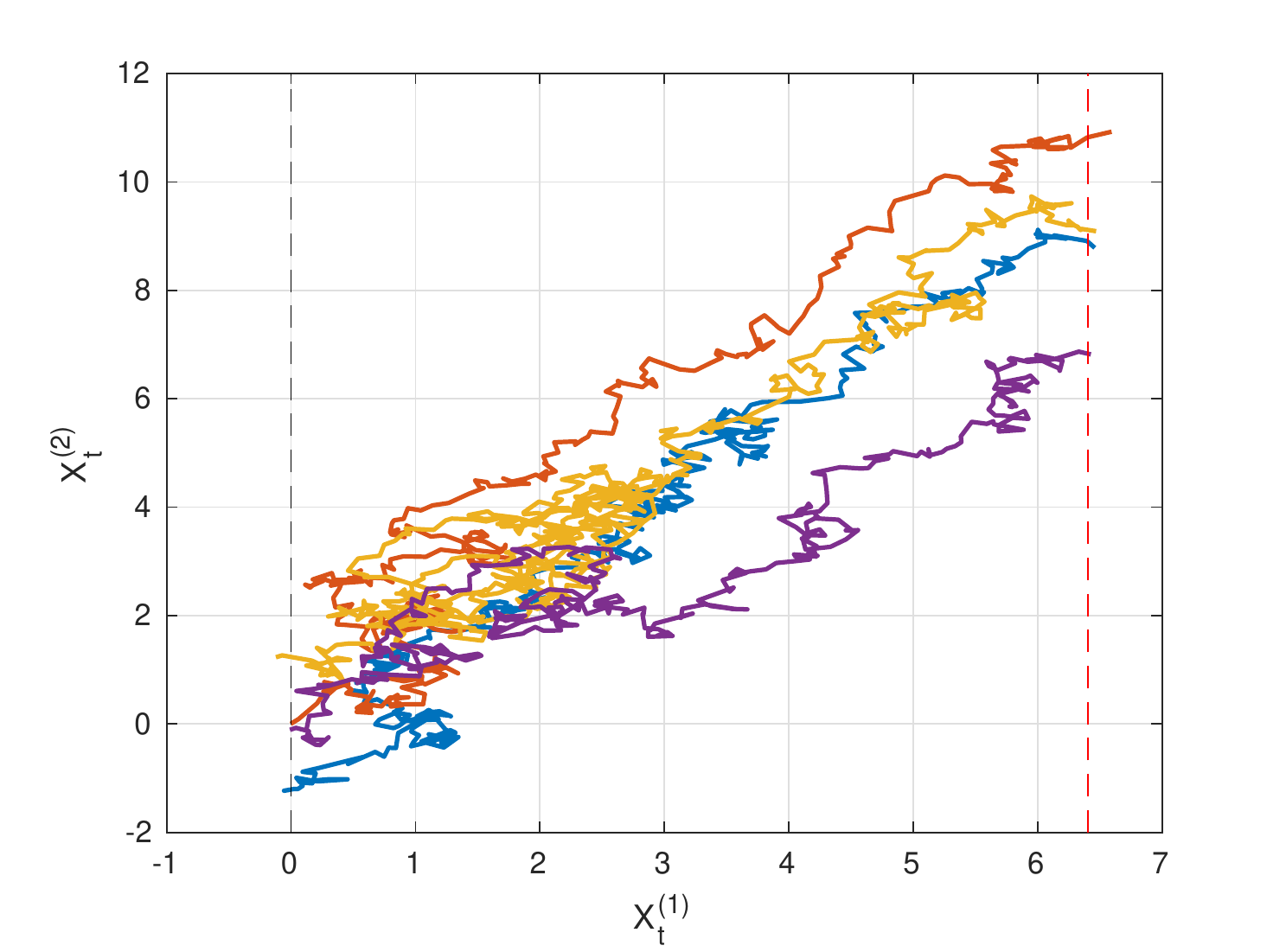}
      \caption{10-dim OU process. Four random realizations of recurrency cycles conditioned on reaching the rare set. The cycles have been plotted until the first hitting time of $B$. Parameters: $A=\{x_1\leq 0\}$, $B = \{x_1\geq u\}$ with $u\approx 6.4$ such that and $\gamma(u)=10^{-6}$.
            }\label{fig:10dimOU_path_hit_18}
\end{figure}

\begin{table}[tb]
\centering
\begin{tabular}{c|ccccc}
$\gamma(u)$        &  $10^{-3}$  & $10^{-4}$  & $10^{-5}$ & $10^{-6}$  & $10^{-7}$  \\
\hline
$\hat\gamma$ & $1.00\cdot 10^{-3}$ & $9.95\cdot 10^{-5}$ & $1.02\cdot 10^{-5}$ & $9.92\cdot 10^{-7}$ & $1.00\cdot 10^{-7}$ \\ 
$\RE(\hat\gamma)$ & 7.84e-03 & 1.03e-02 & 1.35e-02 & 1.12e-02 & 1.49e-02 \\ 
\hline\hline 
$\Eff(\hat\gamma)$ & 0.8 & 2.4 & 9.3 & 34.9 & 180.5 \\ 
$\RE(\hat T_B)$ & 7.87e-03 & 1.02e-02 & 1.35e-02 & 1.12e-02 & 1.49e-02 
\end{tabular}
\caption{RMS algorithm for a 10-dim OU process. Parameters: $Q$ is a matrix with only real eigenvalues, $A = \{x_1 \leq 0\}$, $B = \{x_1\geq u\}$; $u$ has been chosen using \eqref{eq:gamma_OU_explicit} to match the values of $\gamma$ in the first row. We have $\hat\a_A = 0.0124$, $\RE(\hat\a_A) = 2.46\cdot10^{-3}$.}\label{tab:10dimOU}
\end{table}

%------------------------------------------------------------

\subsubsection{2-dim OU, $Q$ with complex eigenvalues}\label{sss:OU2}

In this experiment we put $d=2$, $h=0.01$. We choose $Q$ to have non-real \dc{eigenvalues}: \magenta{for a positive $\theta$,}
\begin{equation}\label{eq:Q_theta}
Q(\theta) = \Bigg[ \, \begin{matrix}1 & \theta \\ -\theta & 1\end{matrix} \ \Bigg] \, .
\end{equation}
\dc{The drift generates a rotating (or spiraling) motion of the paths, with the speed of rotation increasing as \magenta{$\theta$} increases. We}
compare the efficiency of the RMS method for increasing values of $\theta$. The recurrency set $A(\ell)$ and importance function $H(\cdot)$ are as in \eqref{def:A_u} with $\ell=0$ and \eqref{def:IF_u} respectively.
 
The results are presented in Table \ref{tab:2dimOU}. We see that for most values of $\theta$, RMS outperforms the Monte Carlo, but the larger $\theta$ is, the \dc{lower the efficiency ratio Eff($\hat\gamma$) becomes}. At the same time, as $\theta$ grows, the value of $\RE(\hat T_B)$ deviates more and more from the desired target $5\cdot10^{-3}$, as in \eqref{eq:desired_RE}. This indicates a violation of Assumptions \ass. \dc{We note that the estimates $\hat\gamma$ are quite accurate nonetheless, with a minor relative error of a few percent visible for larger values of $\theta$.}

In Fig.\ \ref{fig:2dimOUpath_rho3} we plot five random recurrency cycles conditioned on reaching the rare set $B$. We see that the paths do not gradually drift towards $B$, \dc{\magenta{but rather}  first move far away from $B$, due to the drift-induced rotation. This} hints that the distance-based importance function might be a poor choice. \magenta{Fig.\ \ref{fig:2dimOU_Sreg_quantile} shows that even \magenta{property} \eqref{ass_A} seems to be violated. In this figure we compare the histograms of $S_\text{\rm rec}$ and $S^q_\text{\rm rec}$ in order to compare the distributions of $X^A_1$ and $X^A_+$ (see the discussion Section \ref{ss:choice_AIF}). {The figure shows  that $X^A_+$ has more probability mass in the sets  $\{x_2\leq-1\}$ or $\{x_2\geq1\}$ than $X^A_1$. }}

\vb

\textit{Conclusions.} When $Q$ has non-real eigenvalues, the \magenta{na\"{\i}ve} choice of \magenta{the} recurrency set and \magenta{the} distance-based importance function (i.e., \eqref{def:A_u} and \eqref{def:IF_u}) seems inadequate and leads to \magenta{a relative error higher than expected}. This underscores the fact that one has to be careful with the choice of $A$ and $H(\cdot)$ and verify whether Assumptions \ass\ are satisfied; \magenta{this can be done e.g.\ by} the means described in Section \ref{ss:choice_AIF}. Despite violation of Assumptions \ass, RMS still \dc{gives rather accurate estimates of $\gamma$, and} outperforms Monte Carlo for small $\theta$. 

\begin{figure}[tb]
 \centering
       \includegraphics[width=0.7\linewidth]{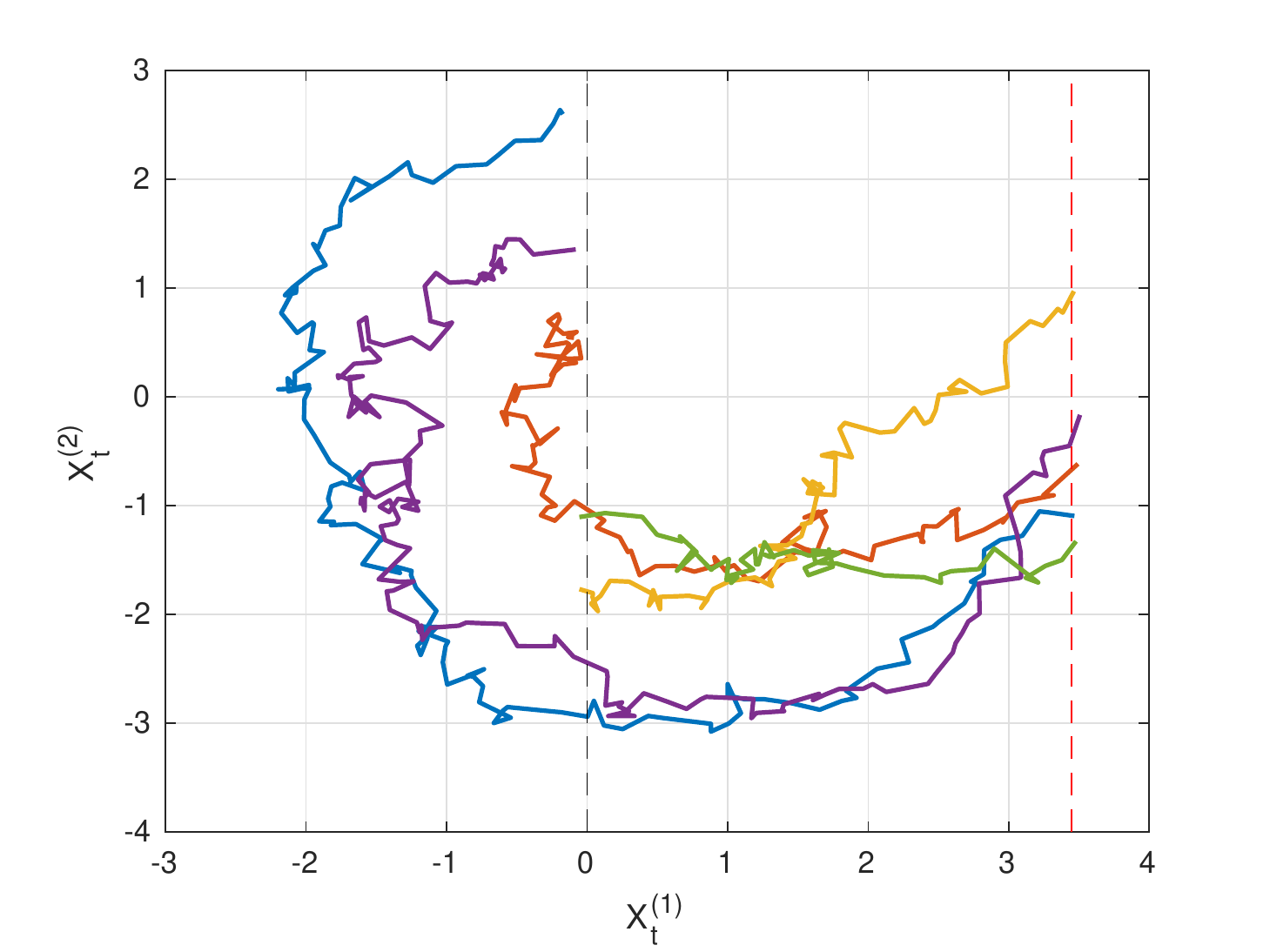}
      \caption{2-dim OU process. Five random realizations of recurrency cycles conditioned on reaching the rare set. The cycles have been plotted until the first hitting time of $B$. Parameters: $A=\{x_1\leq 0\}$, $\theta=3$, $B = \{x_1\geq u\}$ with $u\approx 3.4$ such that $\gamma(u)=10^{-6}$. 
      }\label{fig:2dimOUpath_rho3}
\end{figure}

\begin{figure}[tb]
 \centering
       \includegraphics[width=0.7\linewidth]{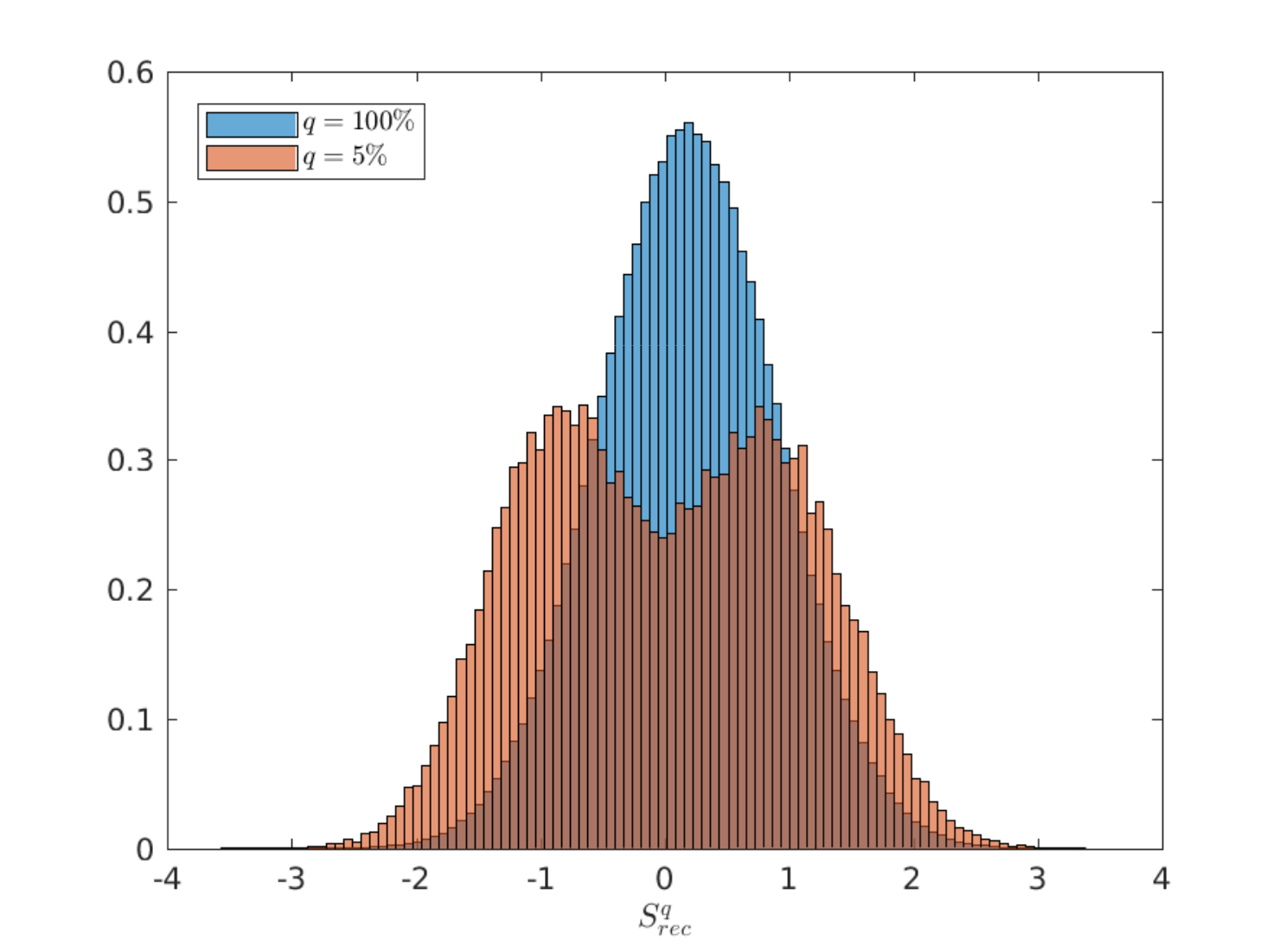}
      \caption{2-dim OU process, $\theta=3$. Marginal histograms of $S^q_\text{rec}$ projected onto the second dimension. The histograms have been normalized to a probability density function.
      }\label{fig:2dimOU_Sreg_quantile}
\end{figure}

\begin{table}[tb]
\centering
\begin{tabular}{c|ccccc}
$\theta$             & 0.5 & 1 & 1.5 & 2 & 3        \\
\hline
$\hat\gamma$ & $9.91 \cdot 10^{-7}$ & $1.00 \cdot 10^{-6}$ & $1.00 \cdot 10^{-6}$ & $9.73 \cdot 10^{-7}$ & $9.60 \cdot 10^{-7}$ \\ 
$\RE(\hat\gamma)$ & 8.20e-03 & 1.05e-02 & 2.34e-02 & 2.66e-02 & 4.01e-02 \\ 
\hline\hline 
$\Eff(\hat\gamma)$ & 31.9 & 27.9 & 7.1 & 5.8 & 1.0 \\ 
$\RE(\hat T_B)$ & 7.63e-03 & 1.05e-02 & 2.37e-02 & 2.67e-02 & 4.01e-02
\end{tabular}
\caption{RMS algorithm applied to 2-dim OU process. Parameters: $Q(\theta)$ as in \eqref{eq:Q_theta}, $A = \{x_1\leq0\}$, $B=\{x_1\geq u\}$; $u$ has been chosen depending on $\theta$ such that in every case $\gamma(u)=10^{-6}$.}\label{tab:2dimOU}
\end{table}

%-------------------------------------------------------------------

\subsection{Franzke (2012) Stochastic Climate Model}\label{ss:franzke}

As our final example, we consider the low-order stochastic climate model presented by \citet{franzke2012predictability}. This is a 4-dimensional  SDE with certain key features that are also present in more complex climate models, including nonlinear (quadratic) drift terms that are energy-conserving. We refer to  \citet{franzke2012predictability} for a more detailed discussion of the physical interpretation of this model.

The model is given by the following set of SDEs. \magenta{It uses a standard, two-dimensional Wiener process $(W\upot,W\uptt)$. We write $x_i:=X^{(i)}_t$, $y_i:=Y^{(i)}_t$ and $W_i := W^{(i)}_t$ to simplify notation. We consider the system}
\begin{align*}
& \begin{aligned}
\d x_1 & = \mu \big( -x_2(L_{12} + a_1 x_1 + a_2 x_2) + d_1x_1 + F_1 \\
& \qquad + L_{13}y_1 + B_{123}^1 x_2 y_1 + (B_{131}^2 + B_{113}^2)x_1 y_1 \big) \d t
\end{aligned}\\
& \begin{aligned}
\d x_2 & = \mu \big( +x_1(L_{21} + a_1 x_1 + a_2 x_2) + d_2x_2 + F_2 \\
& \qquad + L_{24}y_2 + B_{213}^1 x_1 y_1 + (B_{242}^3 + B_{224}^3)x_2 y_2 \big) \d t
\end{aligned}\\
& \d y_1 = \mu \big( -L_{13}x_1 + B_{312}^1 x_1 x_2 + B_{311}^2 x_1^2 + F_3 - \tfrac{\gamma_1}{\ep} y_1 \big)\d t + \tfrac{\sigma_1}{\sqrt\ep} \d W_1 \\
& \d y_2 = \mu \big( -L_{24}x_2 + B_{422}^3 x_2 x_2 + F_4 - \tfrac{\gamma_2}{\ep} y_2 \big)\d t + \tfrac{\sigma_2}{\sqrt\ep} \d W_2
\end{align*}
When the parameter $\ep$ is set to a small value, a separation of timescales is created between the  variables $x_1,x_2$ (slow) and $y_1,y_2$ (fast). The main interest is in the behavior of the slow variables $x_1,x_2$.

\magenta{The parameters we use match those used in \citet{franzke2012predictability}. This means that we set
$\mu=1$, the $B$-coefficients are given by $B_{123}^1=4$, $B_{213}^1=4$, $B_{312}^1=-8$, $B_{131}^2=0.25$, $B_{113}^2=0.25$, $B_{311}^2=-0.5$, $B_{242}^3=-0.3$, $B_{224}^3=-0.4$, $B_{422}^3=0.7$, the $L$-coefficients by $L_{13}=-L_{24}=-0.2$,  and the other parameters by $\omega=1$, $a_1=1$, $a_2=-1$, $d_1=-0.2$, $d_2=-0.1$, $\gamma_1=\gamma_2=1$, $\sigma_1=3$, $\sigma_2=1$. In addition  we put $L_{12} = -L_{21} = 1, \ep = 0.2$. The forcing vector $(F_1,F_2,F_3,F_4)$ is given by $(-0.25,0,0,0)$.}

Since this process is non-standard, in order to build intuition, we \magenta{first generated} a contour plot of the estimated stationary density of $(x_1,x_2)$; see Fig.\ \ref{fig:franzke_heatmap}. The process \magenta{turns out to} randomly switch between two modes: one mode with $x_1\leq x_2$ and a second mode with $x_1\geq x_2$. The estimated density function in Fig.\ \ref{fig:franzke_heatmap} shows that the process is more likely to be in the second mode.

\vb

We use the explicit Euler scheme with $h_0=10^{-4}$ but we store the values of the process every $h=0.01$. The small integration time step $h_0$ is needed for numerical stability. Similar to the previous examples, the rare event we study is the exceedance of a high threshold by $x_1$ under the stationary distribution, \magenta{cf.} \eqref{def:gamma_u}. We choose the recurrency set $A(\ell)$ as in \eqref{def:A_u} with $\ell^*=7.9$ suggested by the algorithm \eqref{eq:lstar}. The importance function $H(\cdot)$ is as in \eqref{def:IF_u}.

\vb

The results of the RMS method are outstanding, see Tab.\ \ref{tab:franzke}. For $u=18.5$, when $\gamma(u)\approx10^{-7}$, \dc{we find Eff($\hat\gamma$)$\approx$ 1522. In other words,} the RMS algorithm is more than 1500 times faster than MC. The values of $\RE(\hat T_B)$ match the desired $5\cdot10^{-3}$ (see \eqref{eq:desired_RE}) very closely even for very high thresholds, indicating that Assumptions \ass\ are satisfied. A random realization of a cycle reaching the rare event, shown in Fig.\ \ref{fig:franzke_pathrare}, is yet another indication that the distance-based importance function is a good choice, as the path seems to gradually drift towards the rare event.

\vb

\textit{Conclusions.} This example shows a \magenta{successful} application of the RMS algorithm to a multidimensional nonlinear stochastic-dynamical model with characteristics of complex climate models. We find that RMS is up to three orders of magnitude faster than MC in this example, and the efficiency gain is expected to be even larger for higher thresholds \magenta{$u$}.

\begin{figure}[tb]
        \centering
       \includegraphics[width=0.7\linewidth]{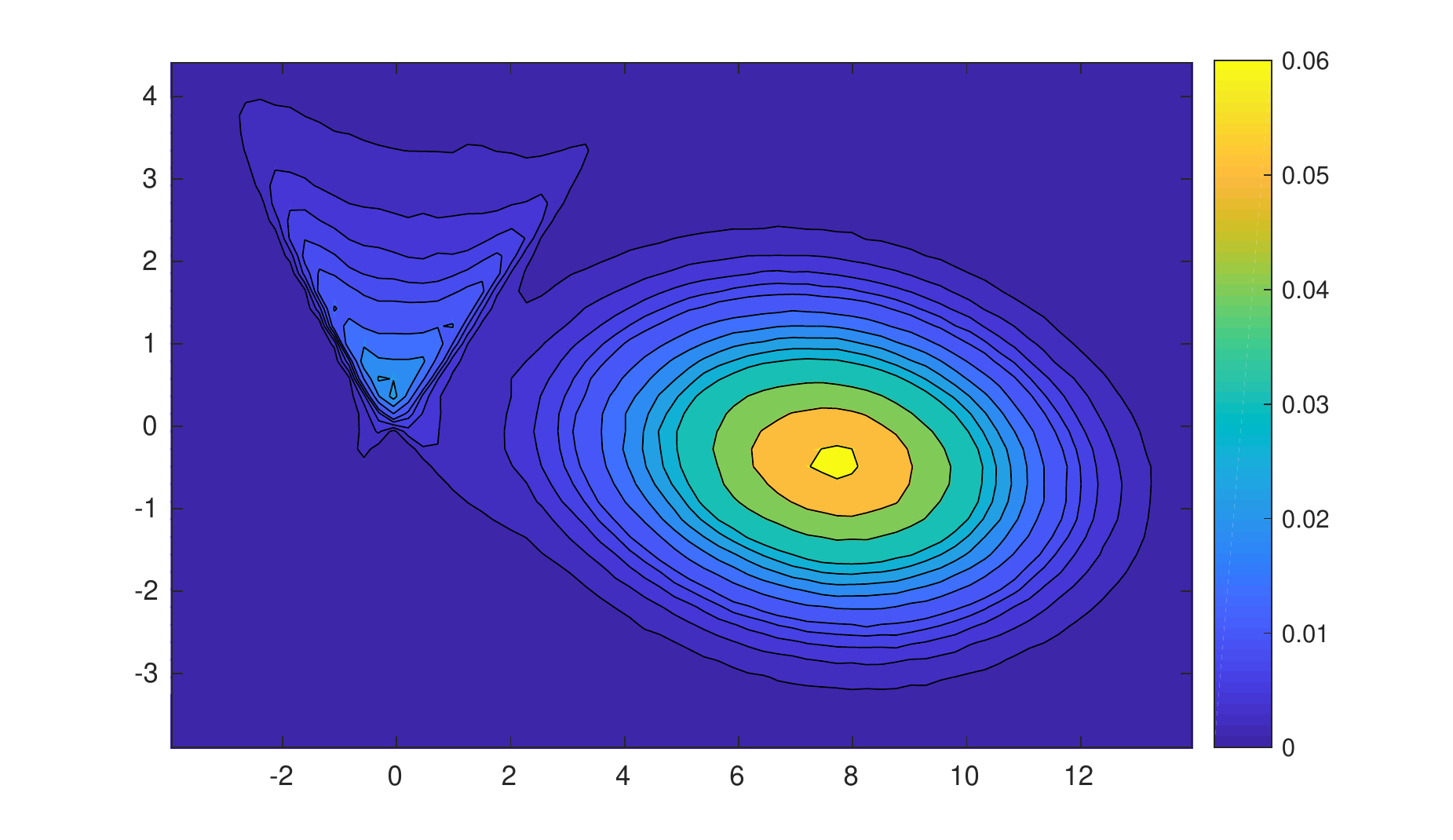}
      \caption{Contour plot of the marginal stationary density of slow variates $(x_1,x_2)$ of the model of \citet{franzke2012predictability}.
      }\label{fig:franzke_heatmap}
\end{figure}

\begin{figure}[tb]
        \centering
       \includegraphics[width=0.7\linewidth]{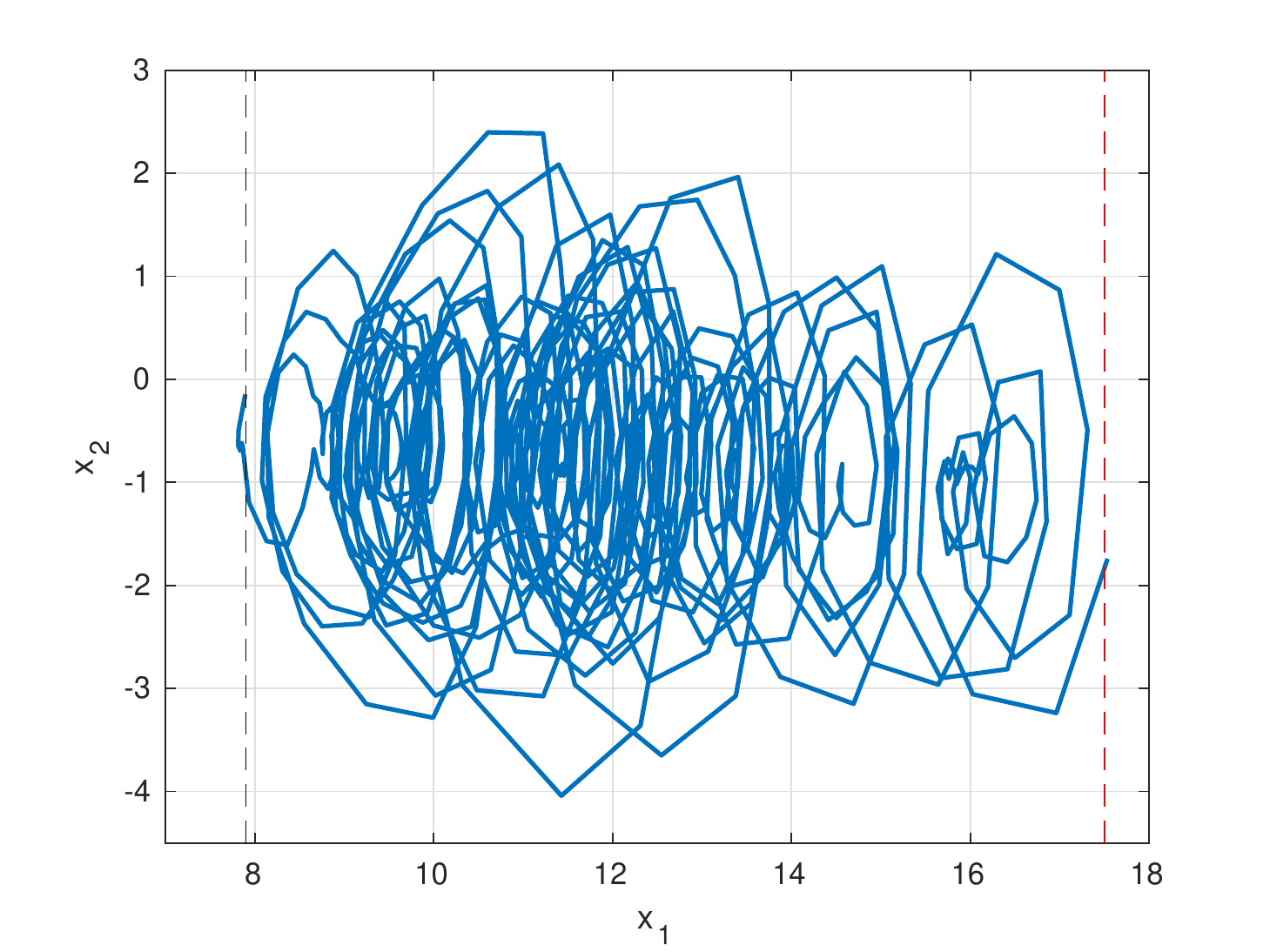}
      \caption{The model of \citet{franzke2012predictability}. A random realization of a recurrency cycle conditioned on reaching the rare set. The cycle has been plotted until the first hitting time of $B$. Parameters: $A=\{x_1\leq \dc{7.9}\}$, $B = \{x_1\geq 17.5\}$, $\gamma(17.5)\approx 1.14\cdot10^{-6}$.
      }\label{fig:franzke_pathrare}
\end{figure}

\begin{table}[tb]
\centering
\begin{tabular}{c|ccccc}
$u$             & 14 & 15 & 16 & 17.5 & 18.5        \\
\hline
$\hat\gamma$ & $1.08 \cdot 10^{-3}$ & $1.99\cdot 10^{-4}$ & $3.00 \cdot 10^{-5}$ & $1.14\cdot 10^{-6}$ & $9.78\cdot 10^{-8}$ \\ 
$\RE(\hat\gamma)$ & 6.1e-03 & 7.2e-03 & 7.4e-03 & 7.4e-03 & 5.8e-03 \\ 
\hline
$\hat\gamma^{\rm MC}$ & $1.08 \cdot 10^{-3}$ & $2.00 \cdot 10^{-4}$ & $2.98 \cdot 10^{-5}$ & $1.12 \cdot 10^{-6}$ & $8.85 \cdot 10^{-8}$ \\ 
$\RE(\hat\gamma^{\rm MC})$ & 1.4e-03 & 2.9e-03 & 6.5e-03 & 2.7e-02 & 8.5e-02 \\ 
\hline\hline
$\Eff(\hat\gamma)$ & 1.9 & 8.6 & 32.1 & 269.9 & 1521.8 \\ 
$\RE(\hat T_B)$ & 5.1e-03 & 6.4e-03 & 7.2e-03 & 6.6e-03 & 5.4e-03 
\end{tabular}
\caption{RMS algorithm applied to the model of \citet{franzke2012predictability}. Parameters: $A=\{x_1 \leq 7.9\}$, $B=\{x_1>u\}$. We have $\hat\a_A = 0.0124$, $\RE(\hat\a_A) = 2.83\cdot 10^{-3}$.}\label{tab:franzke}
\end{table}

\section{Summary}\label{s:discussion}

In this manuscript we have proposed a new algorithm for \magenta{the} estimation of small steady-state probabilities $\gamma = \mu(B)$, as in \eqref{def:gamma}, \dc{of Markov processes with continuous state space. \magenta{Our approach, which we have called  the Recurrent Multilevel Splitting (RMS) algorithm,}} is based on {the} alternative representation (\ref{def:rec_prop}) of $\gamma$ (\magenta{as given in Theorem \ref{thm:reccurence_cycles}}). \dc{This representation is obtained by} dissecting the \magenta{path of the} Markov process into recurrency cycles, each cycle beginning with an inwards crossing of a set $A$. \dc{It allows to transform the problem of estimating $\gamma$ essentially into the problem of estimating  $T_B$,} the expected time spent in \magenta{the} set $B$ in a recurrency cycle.

\vb

In order to efficiently estimate $T_B$ we use Multilevel Splitting (MLS), but we emphasize that other rare event simulation methods could have been used instead (such as Genealogical Particle Analysis or Importance Sampling). We have derived optimal parameters for the MLS in Appendix \ref{appendix:optimal_parameters}, and we have shown (Theorem \ref{thm:logeff_rms}) that under simplifying assumptions, a \dc{suitable} choice of \magenta{the} recurrency set $A$ in combination with the optimal choice of \magenta{the} parameters leads to logarithmic efficiency of the RMS algorithm.

\vb

In Section \ref{s:experiments}, four numerical studies were presented, \dc{where we used the RMS algorithm to estimate steady state probabilities of high threshold exceedances for various SDEs discretized in time. The experiments demonstrate that RMS gives accurate results. \magenta{Furthermore,} they}
unanimously show the efficiency gain of RMS compared to Monte Carlo; in the most notable case of the \cite{franzke2012predictability} model (Section \ref{ss:franzke}), RMS outperforms MC by up to three orders of magnitude.

\vb

\dc{One of the numerical experiments (Section \ref{sss:OU2}) was designed to give suboptimal results, with an SDE displaying rotating motion so that the most straightforward choices of \magenta{the} recurrency set and importance function (as used in the experiments) were \magenta{expected to be not very suitable}. Although the estimates obtained with RMS were still quite accurate, the efficiency gain of RMS compared to MC was decreasing as the rotation speed was increasing. This example showed how the choice of \magenta{the} recurrency set and \magenta{the} importance function can impact the performance of the algorithm.}

\dc{In light of this example, an interesting topic for} future research is the choice of the recurrency set $A$. As already mentioned in Section \ref{ss:choice_AIF}, \magenta{a good choice of $A$ should be a suitable compromise between visiting $A$ relatively often and \eqref{ass_A} being (approximately) met}. We have proposed a method of optimizing $A(\ell)$ parametrized by $\ell$ in \eqref{eq:lstar}, and pointed out a method of testing whether $A$ satisfies \eqref{ass_A} through a quantile validation \eqref{def:Sq}. \dc{Further development of these ideas to construct an optimal $A$ is a challenging open research topic.}

\section*{Acknowledgments}
We thank the organizers of the {\it Summer School in Monte Carlo for Rare Events} (June 2016 at Brown University) for making lecture notes available. 
This work is part of the research programme `Mathematics of Planet Earth' which was funded by the Netherlands Organisation for Scientific Research (NWO), grant number 657.014.003. Michel Mandjes' research is partly funded by the NWO Gravitation Programme NETWORKS, grant number 024.002.003.

\appendix

\section{Technical Results}\label{appendix:technical_results}

\begin{proof}[Proof of Theorem $\ref{thm:reccurence_cycles}$]
Define a new Markov chain $Z_n := (X_{n-1},X_n)$; it is also positive Harris with a stationary measure $\tilde\mu$ satisfying, for measurable sets $C_0,C_1$,
\begin{align*}
\tilde\mu(C_0,C_1) = \p(X_0\in C_0, X_1\in C_1\mid X_0\sim\mu).
\end{align*}
We see that the stopping times $S_n$ coincide with the times the process $Z_n$ visits a set $\mathcal A := (A^c,A)$, with $A^c := \R^d\setminus A$. Since $\mu(A)\in(0,1)$ we have \[\a_A = \p_\mu(X_0\in A, X_1\in A^c)>0.\] According to \citet[Thm.\ 10.4.9]{meyn2012markov} we have, with $\tau_{\mathcal A} := \inf\{n>0: Z_n\in\mathcal A\}$,
\begin{align*}
\tilde\mu(\R^d,B) = \int_{\mathcal A} \tilde\mu(\d x,\d y)\,\Exp_x\sum_{n=0}^{\tau_{\mathcal A}-1} \ind \{Z_n\in (\R^d,B)\}.
\end{align*}
Due to $\tilde\mu(\R^d,B) = \mu(B)$, $\{Z_n\in (\R^d,B)\} = \{X_n\in B\}$, and $\tilde\mu(\mathcal A) = \a_A$, it follows that
\begin{align*}
\mu(B) & = \a_A \cdot\Exp\bigg( \sum_{n=0}^{\tau_{\mathcal A}-1} \ind \{X_n\in B\} \mid Z_0\sim\tilde\mu, Z_0\in\mathcal A\bigg).
\end{align*}
Finally, we recognize that the conditioning above is equivalent to $X_0$ being distributed as an initial point of a recurrency cycle $X^A_1$ in stationarity, so that we conclude \eqref{def:rec_prop}. Similarly, one can show that $\alpha_A = (\Exp_\mu L_1)^{-1}$ by considering the expected time spent in $(\R^d,\R^d)$ within a recurrency cycle.
\end{proof}

\begin{proof}[Derivation of \eqref{eq:SRE_Tb}]
Notice that \eqref{ass:amrein} implies that the number of times the $k$-th threshold $r_k$ is hit, is distributed as a sum of $n_{k-1}\,r_{m-1}$ \textit{independent} Bernoulli trials, each with probability of success $p_k$:
\begin{equation}\label{eq:amrein_implication}
\big(r_k\mid r_{k-1}\big) \eqd \text{Bin}(n_{k-1}r_{k-1},p_k);
\end{equation}
here $\text{Bin}(n,p)$ denotes a Binomial distribution with $n$ trials with success probability $p$, with the convention that $\text{Bin}(0,p) \equiv 0$.
Similarly, \eqref{ass:trare} implies that the total time spent in the rare set is distributed as a sum of $n_mr_m$ \textit{independent} copies from the distribution $R_+$:
\begin{equation}\label{eq:trare_implication}
\big(r_{m+1}\mid r_m\big) \eqd \sum_{k=1}^{n_mr_m} R^{(k)}_+,
\end{equation}
where $R^{(1)}_+,R^{(2)}_+,\ldots$ are i.i.d.\ copies of $R_+$ (with the empty sum being defined as 0).
Using \eqref{eq:amrein_implication} and the law of total variance we obtain, for $k\in\{1,\ldots m\}$, 
\begin{align*}
\Var(r_k) & = \Exp(\Var(r_k|r_{k-1})) + \Var(\Exp(r_k|r_{k-1})) \\
& = \Exp(n_{k-1}r_{k-1}p_k(1-p_k)) + \Var(n_{k-1}r_{k-1}p_k) \\
& = n_{k-1}p_k(1-p_k)\Exp (r_{k-1}) + n_{k-1}^2p_{k}^2 \Var(r_{k-1}).
\end{align*}
Similarly, using \eqref{eq:trare_implication} we obtain
\begin{align*}
\Var(r_{m+1}) = n_m\Exp(r_m)\Var(R_+) + n_{m}^2(\Exp R_+)^2 \Var(r_m).
\end{align*}
Combining these results with \eqref{eq:unbiased_Pk} yields \eqref{eq:SRE_Tb}.
\end{proof}

\section{Derivation of Optimal Parameters}\label{appendix:optimal_parameters}
Following \cite{amrein2011variant}, we assume that the computational effort $w_k$ in the $k$-th stage of Algorithm \ref{alg:MLS} (to sample a path starting from $X_{\tau_k}$ until $\min\{\tau_{k+1},\tau^\text{in}_A\}$) does not depend on the entry state $X_{\tau_k}$. Simplifying this further, we assume that $w_k$ does not depend on $k$, so without loss of generality,
\begin{equation}\label{ass:workload}
w_k \equiv 1, \ \ k\in\{0,\ldots, m\}.
\end{equation}
\redd{A more general cost $w_k$ can be considered for particular problems, see e.g. \cite{lagnoux2006rare}.}

Let $N_k := n_kr_k$,  for $k\in\{0,\ldots, m\}$, be the number of paths simulated in the $k$-th stage of the algorithm, with $r_0 := 1$. Then the average total workload equals
\begin{equation}\nonumber
W := \sum_{k=0}^m \Exp N_k,
\end{equation}
and since $\Exp r_k = p_1\cdots p_k$, cf.\ \eqref{eq:unbiased_Pk}, we conclude
\[\Exp N_k = \prod_{j=0}^k n_jp_j.\] Finally, we formulate the minimization problem
\begin{align*}
\text{minimize} \ \ & \ W := {\textstyle \sum_{k=0}^{m} \prod_{j=0}^k n_jp_j} \\
\text{with respect to:} \ \ & \ m,p_1, \ldots, p_m,n_0,\ldots,n_m \\
\text{subject to:} \ \ & \ \begin{cases}
\RE^2(\hat T_B) \leq \rho ,\\
\prod_{k=1}^m p_k = p_B, \\
m\in\N, \\
p_k \in(0,1), \ \ k\in\{1,\ldots, m\} ,\\
n_k\in\N, \ \ k\in\{0,\ldots, m\}.
\end{cases}
\end{align*}
In our simplified setting, i.e., under Assumptions \ass, we have derived a formula for the corresponding squared relative error in \eqref{eq:SRE_Tb}. We are able to solve the optimization problem above under \magenta{the} additional relaxation that the $n_k$ and $m$ are real and positive. To this end, it is helpful to denote
\begin{align*}
c_k & := {\textstyle \prod_{j=0}^{k-1}} n_jp_j, \ \ k\in\{1,\ldots,m+1\}, \\
a_k & := (1-p_k)/p_k, \ \ k\in\{1,\ldots m\}, \\
a_{m+1} & := \RE^2(R_+).
\end{align*}
Then we can write
\begin{align*}
W = \sum_{k=1}^{m+1} c_k \qand \RE^2(\hat T_B) = \sum_{k=1}^{m+1} \frac{a_k}{c_k}.
\end{align*}
We want to minimize the workload \magenta{$W$} under \magenta{the} constraint that $$\RE^2(\hat T_B) \leq \rho.$$ We do this in steps. First, we fix $m$ and the conditional probabilities $p_1,\ldots,p_m$, so that $a_1,\ldots,a_{m}$ are fixed (recall that $a_{m+1}$ is not a parameter of the algorithm). We relax the problem and let the splitting factors $n_k$ be allowed to attain any real, positive value. This means that we wish to solve (over $c_1,\ldots,c_{m+1}>0$) 
\begin{align*}
\text{minimize} \ \ & \ W(c_1,\ldots,c_{m+1}) := {\textstyle \sum_{k=1}^{m+1} c_k} \\
%\text{with respect to:} \ \ & \ c_1, \ldots, c_{m+1} \\
\text{subject to:} \ \ & \ \begin{cases}
g(c_1,\ldots,c_{m+1}) := \sum_{k=1}^{m+1} \frac{a_k}{c_k} \leq \rho, \\
c_k > 0, \ \ k\in\{1,\ldots, m+1\}.
\end{cases}
\end{align*}
The corresponding Karush--Kuhn--Tucker conditions are
\begin{align*}
\begin{cases}
\nabla W + \mu \nabla g = 0, \\
\mu (g-\rho) = 0, \\
\mu \in [0,\infty).
\end{cases}
\end{align*}
with the gradient `$\nabla$' taken with respect to vector $(c_1,\ldots,c_{m+1})$. These are solved by
\begin{align*}
c_k := \frac{1}{\rho} \sqrt{a_k}\sum_{j=1}^{m+1}\sqrt{a_j},
\end{align*}
with the optimal workload \[W = \frac{1}{\rho}\bigg(\sum_{j=1}^{m+1}\sqrt{a_j}\bigg)^2.\] In the next step, we keep $m$ fixed and minimize over $a_1,\ldots,a_m$. Notice that $1+a_k = 1/p_k$, so that our minimization problem takes the form
\begin{align*}
\text{minimize:} \ \ & \ W(a_1,\ldots,a_{m}) := {\textstyle \tfrac{1}{\rho}\big(\sum_{k=1}^{m+1} \sqrt{a_k}\big)^2} \\
%\text{with respect to:} \ \ & \ a_1, \ldots, a_{m} \\
\text{subject to:} \ \ & \ \begin{cases}
h(a_1,\ldots,a_{m}) := \prod_{k=1}^{m} (1+a_k) = p_B^{-1} ,\\
a_k > 0, \ \ k\in\{1,\ldots, m\}.
\end{cases}
\end{align*}
Not surprisingly, this system is solved by \[a_1=\ldots=a_m = p_B^{-1/m}-1,\] so that the optimal intermediate probabilities coincide:
\begin{align*}
p_k = p_B^{1/m}, \ \ k\in\{1,\ldots, m\},
\end{align*}
with the optimal workload being
\[W(m) = \frac{1}{\rho}\left(m\sqrt{p_B^{-1/m}-1}+\sqrt{a_{m+1}}\right)^2.\] The final step is finding the optimal number of thresholds $m$. We see that the minimizer of $W(m)$ is also a minimizer of
\begin{equation}\nonumber
m\sqrt{\exp(-\log(p_B)/m) - 1}.
\end{equation}
Again, we relax this problem, allowing $m$ to be any real, positive number. \red{Finally, the optimal parameters are:
\begin{equation}\label{eq:optimal_parameters}
\begin{aligned}
m & = c\,|\log p_B| \\
p_k & = p_\text{opt} := \frac{2c-1}{2c} \approx 0.2032, \ \ k\in\{1,\ldots, m \}, \\
n_0 & = \frac{1}{\rho\sqrt{2c-1}}\cdot \bigg(\frac{c\,|\log p_B|}{\sqrt{2c-1}} + \RE(R_+)\bigg), \\
n_k & = 1/p_{k+1} = 1/p_\text{opt}, \ \ k\in\{1,\ldots,m-1\} ,\\
n_m & = \RE(R_+)\cdot \frac{2c}{\sqrt{2c-1}}.
\end{aligned}
\end{equation}
with $c \approx 0.6275$ solving $\exp(1/c) = 2c/(2c-1)$ and the optimal workload reads as in \eqref{eq:optimal_workload}.} Since $m, n_k$ must be integers, we propose to simply round the optimal parameters to the closest integer. \redd{A similar result (but without the last splitting stage, in which we estimate the time spent in the set $B$) has been presented in \cite[Example 3.2.]{lagnoux2006rare}.}

\section{Logarithmic Efficiency of the RMS Algorithm}\label{appendix:efficiency}

In this section we study the efficiency of the RMS method, in the asymptotic regime that the rare event probability \eqref{def:gamma} tends to $0$ (i.e. $\gamma \to0$). First, we notice that if we fix the recurrency set $A$, then $\a_A$ does not change as $\gamma\to0$; \magenta{hence we  only have that $T_B\to0$}. \magenta{This indicates that asymptotic efficiency properties of RMS will be closely related to those of MLS.} 
In order to study \magenta{the performance of the estimator}, we first introduce \magenta{the} concepts of \textit{strong} and \textit{logarithmic efficiency}. 

Let $\hat\Psi_\ell$ be a family of unbiased estimators for $\Psi_\ell>0$, parametrized by $\ell$ such that $\Psi_\ell \to 0$, as $\ell\to\infty$. Let $W(\hat\Psi_\ell)$ denote the computation time corresponding to $\hat\Psi_\ell$. The estimator $\Psi_\ell$ is called \textit{strongly efficient} if
\begin{equation}\label{def:strongeff}
\limsup_{\ell\to\infty} \frac{W(\hat\Psi_\ell)\cdot\Var(\hat \Psi_\ell)}{\Psi_\ell^{2}} < \infty;
\end{equation}
and \textit{logarithmically efficient} if
\begin{equation}\label{def:logeff}
\lim_{\ell\to\infty} \frac{W(\hat\Psi_\ell)\cdot\Var(\hat \Psi_\ell)}{\Psi_\ell^{2-\ep}} = 0, \ \ \text{\rm for all } \ \ep>0.
\end{equation}
Strong efficiency implies that the workload needed to estimate the quantity of interest $\Phi_\ell$ with a desired accuracy \magenta{$\RE^2(\Psi_\ell) \le \rho$} is uniformly bounded as $\ell\to\infty$. Logarithmic efficiency implies that workload needed to achieve the accuracy $\RE^2(\Psi_\ell)=\rho$ is increasing slower than $\Psi_\ell^{-\ep}$ for any $\ep>0$, as $\ell\to\infty$. Evidently, strong efficiency implies logarithmic efficiency.

Before we prove the logarithmic efficiency of RMS in Theorem \ref{thm:logeff_rms} we show \magenta{an} inefficiency result for the Monte Carlo estimator for $T_B$. Let $\hat T^{\text{MC}}_B$ be a sample mean of $N$ independent copies of $R_1$. \magenta{We then have}
\begin{equation}
\RE^2(\hat T^{\text{MC}}_B) = \frac{1-p_B + \RE^2(R_+)}{p_B N}.
\end{equation}
Now to achieve a desired level of accuracy $\RE^2(\hat T^{\text{MC}}_B) \le \rho$, assuming \eqref{ass:workload}, the total required workload is
\begin{equation}\label{eq:MC_workload}
W(\hat T^\text{MC}_B) := \frac{1}{q}\cdot\frac{1-p_B + \RE^2(R_+)}{p_B}.
\end{equation}
As already noted in Section \ref{ss:simplified_setting}, $W(\hat T^{\rm MC}_B)$ is inversely proportional to $p_B$ and so it follows that the Monte Carlo estimator is not logarithmically efficient. 

We have seen, \magenta{cf.} \eqref{eq:optimal_workload}, that the workload of the MLS estimator with the optimal parameters $W(\hat T_B)$ is proportional to $(\log(p_B))^2$. It turns out that under mild additional assumption, the MLS algorithm is logarithmically efficient and thus so is RMS. We make this rigorous in the following theorem.

\begin{theorem}[Logarithmic Efficiency of RMS]\label{thm:logeff_rms}
Fix the recurrency set $A$ and let the set $B_\ell$ be parametrized by $\ell$, such that $\gamma_\ell := \mu(B_\ell)\to 0$ as $\ell\to\infty$. Assume 
\begin{itemize}
\item[$\circ$]
that \magenta{the} estimators $\hat\a_A$ and $\hat T_{B_\ell}$ are independent;
\item[$\circ$] that Assumptions \ass\ are valid for each $\ell$;
\item[$\circ$] that the workload satisfies \eqref{ass:workload};
\item[$\circ$] and that, for $\delta>0$ sufficiently small,
\begin{equation}\label{eq:bounded_sre_Rplus}
\limsup_{\ell\to\infty} \frac{\Var(R_+)}{(\Exp R_+)^2} < \infty, \quad 
\lim_{\ell\to\infty} T_{B_\ell}\cdot p_{B_\ell}^{-\delta} = 0.
\end{equation}
\end{itemize}
Then the RMS estimator $\hat\gamma_\ell$ for $\gamma_\ell$, with the optimal choice of the parameters \eqref{eq:optimal_parameters}, is logarithmically efficient.
\end{theorem}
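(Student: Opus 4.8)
The plan is to reduce the logarithmic efficiency of the full estimator $\hat\gamma_\ell = \hat\a_A\cdot\hat T_{B_\ell}$ to that of the inner MLS estimator $\hat T_{B_\ell}$, and then to read off the relevant work--variance product directly from the optimal workload \eqref{eq:optimal_workload}. Since the recurrency set $A$ is held fixed, $\a_A\in(0,1)$ is a constant independent of $\ell$ and $\gamma_\ell = \a_A\,T_{B_\ell}$, so $\gamma_\ell^{2-\ep}$ is proportional to $T_{B_\ell}^{2-\ep}$ and $\gamma_\ell\to0$ is equivalent to $T_{B_\ell}\to0$. Estimating the fixed quantity $\a_A$ to a fixed relative error costs a bounded number of samples, so $W(\hat\a_A)=O(1)$ and $\Var(\hat\a_A)=O(1)$ as $\ell\to\infty$. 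Using the assumed independence of $\hat\a_A$ and $\hat T_{B_\ell}$ together with unbiasedness, I would expand $\Var(\hat\gamma_\ell)=\Var(\hat\a_A)\Var(\hat T_{B_\ell}) + \Var(\hat\a_A)\,T_{B_\ell}^2 + \a_A^2\,\Var(\hat T_{B_\ell})$ and note that the workloads add, $W(\hat\gamma_\ell)=W(\hat\a_A)+W(\hat T_{B_\ell})$. Hence the asymptotics are entirely governed by $W(\hat T_{B_\ell})$ and $\Var(\hat T_{B_\ell})$.

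Next I would quantify these. For the optimal MLS estimator the target relative error is a fixed $\rho$, so $\Var(\hat T_{B_\ell})=\rho\,T_{B_\ell}^2$, whence $\Var(\hat\gamma_\ell)=O(T_{B_\ell}^2)$. The optimal workload is \eqref{eq:optimal_workload}; since the first part of \eqref{eq:bounded_sre_Rplus} keeps $\RE(R_+)$ bounded in $\ell$, the bracket in \eqref{eq:optimal_workload} is dominated by $c\,|\log p_{B_\ell}|/\sqrt{2c-1}$, giving $W(\hat T_{B_\ell})=O\big((\log p_{B_\ell})^2\big)$, and therefore also $W(\hat\gamma_\ell)=O\big((\log p_{B_\ell})^2\big)$. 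Combining, the work--variance product obeys $W(\hat\gamma_\ell)\,\Var(\hat\gamma_\ell)=O\big((\log p_{B_\ell})^2\,T_{B_\ell}^2\big)$, so that
\[
\frac{W(\hat\gamma_\ell)\,\Var(\hat\gamma_\ell)}{\gamma_\ell^{2-\ep}} = O\!\left(\big(\log p_{B_\ell}\big)^2\,T_{B_\ell}^{\ep}\right),
\]
and it remains only to show the right-hand side tends to $0$ for every $\ep>0$.

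The hard part will be controlling the polylogarithmic factor $(\log p_{B_\ell})^2$ coming from the number of splitting levels. A priori $p_{B_\ell}$ may decay much faster than $T_{B_\ell}$: indeed $T_{B_\ell}=p_{B_\ell}\,\Exp R_+\ge p_{B_\ell}$ because $R_+\ge1$, so $p_{B_\ell}\le T_{B_\ell}\to0$ and $|\log p_{B_\ell}|\ge|\log T_{B_\ell}|$, and the danger is that $(\log p_{B_\ell})^2$ cannot be dominated by any power of $T_{B_\ell}^{-1}$. This is exactly where the second part of \eqref{eq:bounded_sre_Rplus} is tailored to help: for the admissible $\delta>0$ it yields $T_{B_\ell}\le p_{B_\ell}^{\delta}$ for all large $\ell$, and taking logarithms (both sides negative) gives $|\log p_{B_\ell}|\le \delta^{-1}\,|\log T_{B_\ell}|$. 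Substituting,
\[
\big(\log p_{B_\ell}\big)^2\,T_{B_\ell}^{\ep} \le \frac{1}{\delta^2}\,\big(\log T_{B_\ell}\big)^2\,T_{B_\ell}^{\ep} \longrightarrow 0 \qquad (\ell\to\infty),
\]
since $t\mapsto (\log t)^2\,t^{\ep}\to0$ as $t\downarrow0$ for any $\ep>0$. This would establish \eqref{def:logeff} for $\hat\gamma_\ell$ and finish the argument.

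In summary, the genuine obstacle is the third step, namely bounding the logarithmic workload factor $(\log p_{B_\ell})^2$ by a slowly growing function of $T_{B_\ell}^{-1}$; the hypothesis $\lim_{\ell\to\infty}T_{B_\ell}\,p_{B_\ell}^{-\delta}=0$ is precisely what supplies this, while the boundedness of $\RE(R_+)$ is what keeps the optimal workload at order $(\log p_{B_\ell})^2$. It is worth noting that strong efficiency \eqref{def:strongeff} cannot be obtained by this route, since setting $\ep=0$ above leaves the divergent factor $(\log p_{B_\ell})^2$; only the logarithmic notion survives. The remaining steps --- the reduction to $\hat T_{B_\ell}$ and the variance bookkeeping for the product --- are routine given the independence assumption and the fact that $A$ is held fixed.
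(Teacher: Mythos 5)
Your proof is correct and follows essentially the same route as the paper's: reduce to the MLS estimator $\hat T_{B_\ell}$ using independence and the fixed set $A$, read off $W(\hat T_{B_\ell})=O\big((\log p_{B_\ell})^2\big)$ from \eqref{eq:optimal_workload} via the boundedness of $\RE(R_+)$, and use the hypothesis $T_{B_\ell}\,p_{B_\ell}^{-\delta}\to0$ to kill the polylogarithmic factor. The only (immaterial) difference is the last step: the paper factors $T_{B_\ell}^{\ep}=(T_{B_\ell}p_{B_\ell}^{-\delta})^{\ep}\,p_{B_\ell}^{\delta\ep}$ and lets $p_{B_\ell}^{\delta\ep}(\log p_{B_\ell})^2\to0$, whereas you convert the same hypothesis into $|\log p_{B_\ell}|\le\delta^{-1}|\log T_{B_\ell}|$ and conclude from $(\log t)^2t^{\ep}\to0$; your variance bookkeeping for the product is in fact slightly more careful than the paper's.
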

We point out that the first part of the assumption \eqref{eq:bounded_sre_Rplus} is equivalent to strong efficiency of the crude Monte Carlo estimator for $R_+$, under the workload assumption \eqref{ass:workload}. This is not too restrictive, as often the main difficulty \magenta{when estimating} $T_B$ \magenta{lies in} the fact that $p_B$ is extremely small (and \magenta{does not relate to} the large variance of $R_+$.) Since $\gamma_\ell\to 0$ and $A$ is fixed then necessarily $T_{B_\ell}\to0$. In the second part of \eqref{eq:bounded_sre_Rplus} we require that there exists a $\delta>0$ such that $\Exp R_+p_{B_\ell}^{1-\delta}\to0$. Loosely speaking, it means that $p_{B_\ell}$ converges to $0$ at least polynomially faster than $\Exp R_+$ grows to infinity; this is trivially satisfied when $\Exp R_+$ is bounded.

\begin{proof}[Proof of Theorem \ref{thm:logeff_rms}]
Since the recurrency set $A$ is fixed, \magenta{the quantities} $\hat\a_A$, $\RE(\hat\a_A)$ \magenta{and} $W(\hat\a_A)$ do not \magenta{depend} on $\ell$. \magenta{In addition, $\a_A\cdot T_{B_\ell}=\mu(B_\ell)\to0$ is equivalent to $T_{B_\ell}\to 0$.} Moreover, since $T_{B_\ell} = p_{B_\ell}\cdot\Exp R_+$, \magenta{cf.} \eqref{eq:Tb_decomposition}, and $\Exp R_+\geq1$, we necessarily have $p_{B_\ell}\to0$, as $\ell$ grows. \magenta{Observe that}
\begin{align}
\nonumber& \frac{W(\hat\gamma_\ell)\Var(\hat \gamma_\ell)}{\gamma_\ell^{2-\ep}} = \frac{W(\hat\a_A)+W(\hat T_{B_\ell})}{\gamma_\ell^{-\ep}} \cdot \frac{\Var(\hat\a_A\cdot\hat T_{B_\ell})}{\a_A^{2}\cdot T_{B_\ell}^{2}} \\
%& \quad\quad \cdot (\RE(\hat\a_A)+\RE(\hat T_{B_\ell})) \\
\label{step_logeff}& = \gamma_\ell^{\ep}\big(W(\hat\a_A)+W(\hat T_{B_\ell})\big) \cdot \big(\RE(\hat\a_A)+\RE(\hat T_{B_\ell})\big)
%& \sim \a_A^{\ep}p_{B_\ell}^{\ep}(\Exp R_+)^{\ep}W(\hat T_{B_\ell}) \cdot \big(\RE(\hat\a_A)+\RE(\hat T_{B_\ell})\big) \\
%& \sim \a_A^{\ep}p_{B_\ell}^{\ep}(\Exp R_+)^{\ep}W(\hat T_{B_\ell}) \cdot \big(\RE(\hat\a_A)+\RE(\hat T_{B_\ell})\big)
\end{align}
We put $\RE(\hat T_{B_\ell}) = q$. Then the workload $W(\hat T_{B_\ell})$ is given as in \eqref{eq:optimal_workload}, and we see that
\begin{align*}
\gamma_\ell^{\ep}W(\hat T_{B_\ell}) & = \a_A^{\ep}T_{B_\ell}^{\ep} \cdot \frac{1}{q}\bigg(\frac{c\,|\log p_{B_\ell}|}{\sqrt{2c-1}} + \RE(R_+)\bigg)^2 \\
& \sim \frac{c^2\a_A^{\ep} (T_{B_\ell} p_{B_\ell}^{-\delta})^\ep}{q(2c-1)}\cdot p_{B_\ell}^{\delta\ep}(\log p_{B_\ell})^2,
\end{align*}
where $\delta>0$ is as in \eqref{eq:bounded_sre_Rplus}. Now since $p_{B_\ell}\to0$, we also have $$p_{B_\ell}^{\delta\ep}(\log p_{B_\ell})^2\to0,$$ and $\gamma_\ell^{\ep}W(\hat T_{B_\ell}) \to 0$, which applied to \eqref{step_logeff} finishes the proof.
\end{proof}

\medskip
\bibliographystyle{apalike}
\bibliography{bibliografia_recurrent_splitting}
\end{document}